\tikzset{negated/.style={
        decoration={markings,
            mark= at position 0.5 with {
                \node[transform shape] (tempnode) {$\backslash$};
            }
        },
        postaction={decorate}
    }
}
\tikzset{
    vertex/.style={draw,circle,thick, inner sep=1.5pt,minimum size=6pt,
    },
    edge/.style={thick},
    dedge/.style = {->,> = latex',thick}
}
\definecolor{asparagus}{rgb}{0.53, 0.66, 0.42}
\Crefname{equation}{Eq.}{Eqs.}
\newtheorem{theo}{Theorem}[section]
\newtheorem{theorem}[theo]{Theorem}
\newtheorem{lemma}[theo]{Lemma}
\newtheorem{proposition}[theo]{Proposition}
\newtheorem{corollary}[theo]{Corollary}
\newtheorem{definition}[theo]{Definition}
\theoremstyle{definition}
\newtheorem{remark}[theo]{Remark}
\newtheorem{example}[theo]{Example}
\numberwithin{equation}{section}
\begin{document}
\keywords{Gain graph, $G$-cospectrality, $\pi$-cospectrality, Unitary representation, Switching equivalence, Switching isomorphism.}

\title{On cospectrality of gain graphs}

\author[M. Cavaleri]{Matteo Cavaleri}
\address{Matteo Cavaleri, Universit\`{a} degli Studi Niccol\`{o} Cusano - Via Don Carlo Gnocchi, 3 00166 Roma, Italia}
\email{matteo.cavaleri@unicusano.it   \textrm{(Corresponding Author)}}

\author[A. Donno]{Alfredo Donno}
\address{Alfredo Donno, Universit\`{a} degli Studi Niccol\`{o} Cusano - Via Don Carlo Gnocchi, 3 00166 Roma, Italia}
\email{alfredo.donno@unicusano.it}

\begin{abstract}
We define $G$-cospectrality of two $G$-gain graphs $(\Gamma,\psi)$ and $(\Gamma',\psi')$, proving that it is a switching isomorphism invariant. When $G$ is a finite group, we prove that $G$-cospectrality is equivalent to cospectrality with respect to all unitary representations of $G$.
Moreover, we show that two connected gain graphs are switching equivalent if and only if the gains of their
closed walks centered at an arbitrary vertex $v$ can be simultaneously conjugated. In particular, the number of switching equivalence classes on an underlying   graph $\Gamma$ with $n$ vertices and $m$ edges,  is equal to the number of simultaneous conjugacy classes of the group $G^{m-n+1}$.

We provide examples of $G$-cospectral non-switching isomorphic graphs and we prove that any gain graph on a cycle is determined by its $G$-spectrum. Moreover, we show that when $G$ is a finite cyclic group, the cospectrality with respect to a faithful irreducible representation implies the cospectrality with respect to any other faithful irreducible representation, and that the same assertion is false in general.
\end{abstract}

\maketitle

\begin{center}
{\footnotesize{\bf Mathematics Subject Classification (2020)}: 05C22, 05C25, 05C50, 20C15.}
\end{center}

\section{Introduction}
The spectrum of the adjacency matrix of a graph determines  many properties of the graph:
number of edges, connectivity, bipartiteness, automorphisms, etc. (see \cite{cve}).
This relation between graph structural properties  and linear algebra is the  focus of the \emph{spectral graph theory}. Indeed, one can be interested in what properties of a graph are determined by its spectrum, but also, more drastically, in which graphs are determined by their spectrum \cite{deter}. Actually the latter
 issue is old (it originated from chemistry \cite{gp}) and it is clearly related with the problem  of constructing pairs of \emph{cospectral} non-isomorphic graphs.
 Since the first examples of such a pairs \cite{Colla} to today, many works have been devoted to the subject: we point out the result of Schwenk, asserting that \emph{almost all} tree have cospectral, non-isomorphic mates \cite{tree}, and the development by Godsil and McKay of a tool to generate cospectral graphs \cite{GoMc}, known as \emph{Godsil-McKay switching}.
 \\\indent Part of the success of graphs is due to the fact that they are a model for systems of \emph{things} that have or do not have, two by two, interaction. The introduction of signed graphs  originates from the need to distinguish also between positive and negative interaction. A signed graph is a graph whose edges can be positive or negative \cite{Harary}: more precisely, it is a pair $(\Gamma,\sigma)$ where $\Gamma$ is  the \emph{underlying graph} and $\sigma$ is the \emph{signature}, that is a map from the set of the edges of $\Gamma$ to $\{\pm 1\}$.
 As  graphs are investigated up to isomorphism, signed graphs are investigated up to \emph{switching isomorphism}  \cite{zasign,zasgraph}. The notion of switching isomorphism is inspired from the Seidel's switching \cite{sei},
but it is  based on the switch of the signs of the edges incident to a fixed vertex. A \emph{balanced signed graph} is a graph  where the product of the signs of the edges along any cycle is positive or equivalently, it is a signed graph whose edges can be switched to be all positive \cite{Harary}.
A signed graph has a natural \emph{signed adjacency matrix}, a matrix whose non-zero entries belong to $\{\pm 1\}$. It turns out that the spectrum of the signed adjacency matrix is invariant under switching isomorphism:
 there are all the requisites for the development of a \emph{spectral theory of signed graphs} \cite{zasmat}.
 \\\indent Spectral theory of signed graphs is not only a collection of  extensions of  results from the classical setting. One of its first results, for example, is the Acharya's characterization  of  balance  \cite{acharya}, and it is peculiar of signed graphs. It states that
 a signed graph is  balanced if and only if it is  cospectral with its underlying graph. Moreover, also direct generalizations from classical theory are often far from being trivial, and many questions remain still open \cite{open}.
Many works on the subject can be found in the recent literature, even restricting the field to cospectrality \cite{ak,BB,cos, lolli}. In particular in \cite{cos} the Godsil-McKay switching is extended to signed graphs.
 \\\indent A further generalization of the notion of graph is that of  gain graph, or also voltage graphs. For a given group $G$, a  $G$-gain graph (or gain graph over $G$) is a pair $(\Gamma,\psi)$ where $\Gamma$ is the underlying graph and $\psi$ is the gain function, that is  a map from the pairs $(u,v)$  of adjacent vertices $u,v$ of $\Gamma$ to the group $G$, with the property that $\psi(u,v)=\psi(v,u)^{-1}$, so that $\psi(u,v)$ is the gain from $u$ to $v$. The reason behind the introduction of  gain graphs is to be found, on the one hand in the theory of biased graphs, of which the gain graphs are special cases \cite{zaslavsky1}, on the other hand in the theory of coverings in topological graph theory \cite{voltage}. For the numerous interconnections with other fields and for a rich and regularly updated bibliography, we refer to \cite{zasbib}. Anyway, it is evident how the concept of balance is immediately generalized from signed graphs to gain graphs, as well as, perhaps less immediately, that of switching isomorphism.
 \\\indent When the group $G$ is a subgroup of the  multiplicative group of the complex numbers $\mathbb C$, there is a natural complex matrix playing the role of adjacency matrix of a gain graph $(\Gamma, \psi)$. Moreover, if $G$ is a subgroup of the group $\mathbb T=\{z\in \mathbb{C} : |z|=1\}$ of the complex units, this matrix is also Hermitian with a real spectrum, which is also  a switching isomorphism invariant.
This allowed a development  of a \emph{spectral theory of $\mathbb T$-gain graphs} \cite{reff1}. In particular  Acharya's characterization  of  balance  \cite{adun} and Godsil-McKay switching \cite{gm} have been extended to $\mathbb T$-gain graphs.
\\\indent Some other special groups $G$ have been considered as group of gains for a graph. For example the group of invertible elements of a finite field in \cite{shahulgermina} and the group of quaternion units in
\cite{quat}. In both cases, there was already a definition of spectrum for matrices with entries in $G\cup\{0\}$ and then a definition for the spectrum of a $G$-gain graph. Among other things, in \cite{quat,shahulgermina} it has been proven an extension of Acharya's characterization to these gain graphs.
\\\indent The main obstruction for a spectral theory of   general gain graphs, without any assumption on the group of gains, is that the adjacency matrices are not complex valued. Possible solutions are offered by the work \cite{JACO}, where the generalization of  Acharya's characterization is given in terms of \emph{group algebra valued matrices} $M_n(\mathbb C G)$ (see \cite[Theorem 4.2]{JACO}) and in terms of \emph{$\pi$-spectrum}, or \emph{spectrum of a gain graph with respect to a representation} $\pi$ of the group $G$ of gains (\cite[Theorem 5.1]{JACO}).
In both cases, the balance of $(\Gamma,\psi)$ is  characterized via some kind of cospectrality with $(\Gamma,\bold{1_G})$, that is the gain graph whose underlying graph is $\Gamma$ and the gain function is constantly $1_G$, the unit element of $G$.
\\\indent This manuscript is conceptually the continuation of \cite{JACO}, aiming  at investigating cospectrality  of  $(\Gamma,\psi)$ with any other gain graph, and not only with $(\Gamma,\bold{1_G})$ as in \cite{JACO}. This generalization is not trivial and there are several unexpected results, especially in the group representation approach.
  \\\indent As we said, spectral graph theory is the investigation of  graphs via linear algebra, watching them like operators on a vector space.
 This is exactly what group representation theory does with groups and group elements. It seems very natural to combine these two approaches  in order to develop a spectral theory of gain graphs. Once a (unitary) representation $\pi$ of $G$ is fixed, every $G$-gain graph $(\Gamma,\psi)$ has a complex (Hermitian) adjacency matrix,
 obtained via  an extension  to $M_n(\mathbb C G)$ of the Fourier transform at $\pi$. And then $\pi$-spectrum and $\pi$-cospectrality  are  defined.
 Moreover,  the signed and complex unit spectral investigations we have talked about are covered as particular cases with a suitable choice of the representation $\pi$
 (see \Cref{ex:2,ex:3}).
\\\indent  When $\pi$ is faithful (e.g., when it is the left regular representation $\lambda_G$), we know from \cite{JACO}  that if a gain graph $(\Gamma,\psi)$ is $\pi$-cospectral with $(\Gamma,\bold{1_G})$ then it is balanced and
 switching isomorphic with $(\Gamma,\bold{1_G})$. In particular $(\Gamma,\psi)$ and $(\Gamma,\bold{1_G})$ are cospectral with respect to any other representation.   On the contrary two gain graphs $(\Gamma,\psi)$ and $(\Gamma',\psi')$ can be cospectral with respect to a faithful representation even if they are not switching isomorphic and even if they are not cospectral with respect to some other (faithful, irreducible) representation (see \Cref{exa:2}).
 For finite cyclic groups $\mathbb T_m$ it is at least true that cospectrality with respect to a faithful irreducible representation is equivalent to cospectrality with respect to any other faithful irreducible representation (see \Cref{coro:ultimo}). But this is false in general (see \Cref{exa:s4}). This gives rise to a need for a more canonical definition of cospectrality
 that was not evident after \cite{JACO}, that is one of the main motivations of this paper, leading to the introduction of the notion of $G$-cospectrality.

 \noindent The paper is organized as follows.
 \\ \indent In \Cref{sec:2} we provide the  essential tools on group representations and their extension to the algebras $\mathbb C G$ and  $M_n(\mathbb C G).$
 \\ \indent
In   \Cref{sec:3} we give some preliminaries on gain graphs. We characterize the switching equivalence class of a gain graph in terms of \emph{simultaneous conjugation} of the gains of its closed walks centered at a vertex (see \Cref{thm:primo}) and  we prove that switching equivalence classes of gain graphs on a graph $\Gamma$ with $n$ vertices and $m$ edges are as many as the simultaneous conjugacy classes of the group $G^{m-n+1}$ (see \Cref{coro:card}). We believe that this is a result whose interest can even go beyond the spectral theory.
  \\ \indent In \Cref{sec:4}  we introduce the notion of $G$-cospectrality in  $M_n(\mathbb C G)$ (see \Cref{def:cosp}),  inspired from the trace characterization of cospectrality in $M_n(\mathbb C)$, but in such a way that this definition is switching isomorphism invariant (see \Cref{teo:benposto}).
We prove in \Cref{teo:cosp} that when $G$ is finite, two gain graphs $(\Gamma,\psi)$ and $(\Gamma',\psi')$ are $G$-cospectral if and only if they are cospectral with respect to
all unitary representations of $G$, or equivalently, with respect to a complete system of unitary irreducible representations.
In order to prove this last result, we show how actually all information about the $\pi$-spectrum of a gain graph is on the characters of the gains of the closed walks (see \Cref{teo:pico}).
\\ \indent In addition to the examples already mentioned, in \Cref{sec:5} we analyze, in the light of these new results,  the cases of signed graphs, $\mathbb T_m$-gain graphs and cyclic gain graphs. In particular
we provide non-trivial pairs of $G$-cospectral non-switching isomorphic gain graphs (see  \Cref{exa:1,exa:nuovofine}), and, on the other extreme,
we prove  in \Cref{coro:det}  that every gain graph with cyclic underlying graph is \emph{determined by its $G$-spectrum}. We did this by completely determining the switching equivalence and the switching isomorphism classes  on a cyclic graph. \\
\indent The results of this paper allow to summarize the relationships between different cospectrality notions for two gain graphs $(\Gamma_1,\psi_1)$ and $(\Gamma_2, \psi_2)$ as in Fig. \ref{tableau}.

\begin{figure}

\begin{center}

\begin{tikzcd}[arrows=Rightarrow, column sep=1.3cm, row sep=1.3cm, every arrow/.append style={shift left=0.8ex}]
    &&  \substack{ \text{\bf $\pi$-cospectrality}\\ \text{ $\forall \pi$ in a complete system}\\ \text{ of unitary irreducible reps.} }
        \arrow{d}
    &\\
\substack{ \text{Switching}\\ \text{ Equivalence} }
    \arrow{r}
    & \substack{ \text{Switching}\\ \text{ Isomorphism} }
        \arrow[negated]{l}
            \arrow{r}
            &\text{\bf $G$-cospectrality}
            \arrow{d}
               \arrow{u}
              \arrow[negated]{l}
                  \arrow{r}
             &\text{\footnotesize \bf $\lambda_G$-cospectrality}
             \arrow[negated]{l} \\
             &&  \substack{ \text{\bf $\pi$-cospectrality}\\ \text{ $\forall \pi$ unitary rep. } }
              \arrow{u}
              \arrow{d}
             &\\
             && \substack{ \text{\bf $\pi$-cospectrality}\\ \text{ $\forall \pi$ unitary, irreducible, faithful rep.} }
              \arrow[negated]{u}
              \arrow{d}
              &\\
              && \substack{ \text{\bf $\pi$-cospectrality}\\ \text{ $\exists \pi$ unitary, irreducible, faithful rep.} }
               \arrow[negated]{u}
               \arrow[gray,shift left=-5ex,"\mbox{\tiny if }G=\mathbb T_m"']{u}

           &
             \end{tikzcd}
\end{center}\caption{Cospectrality properties for two $G$-gain graphs $(\Gamma_1,\psi_1)$ and $(\Gamma_2, \psi_2)$ and their connections, with $G$ finite.}\label{tableau}
\end{figure}
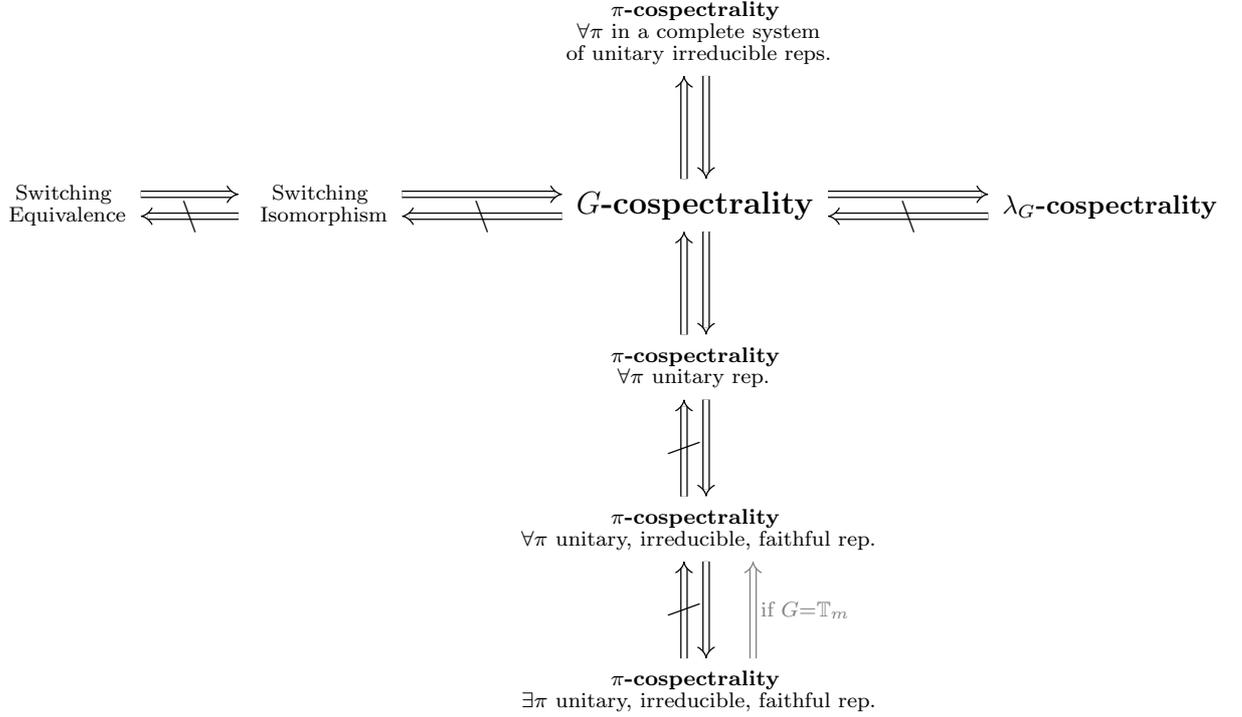

 \section{Preliminaries on representations}\label{sec:2}
Let $G$ be a (finite or infinite) group, with unit element $1_G$. Let $M_k(\mathbb{C})$ be the set of all square matrices of size $k$ with entries in $\mathbb C$
and let $GL_k(\mathbb{C})$ be the group of all invertible matrices in $M_k(\mathbb{C})$. A representation $\pi$ of \emph{degree $k$} (we write $\deg \pi=k$) of $G$  is a group homomorphism $\pi\colon G\to GL_k(\mathbb{C})$.
In other words, we are looking at the elements of $G$ as automorphisms  of a vector space $V$ on $\mathbb C$ with  $\dim V=k$.\\
\indent  Let $U_k(\mathbb{C}) = \{M\in GL_k(\mathbb{C}) : M^{-1}=M^\ast\}$ be the set of unitary matrices of size $k$, where $M^*$ is the Hermitian (or conjugate) transpose of $M$. Then the representation $\pi$ is said to be \emph{unitary} if $\pi(g)\in U_k(\mathbb{C})$, for each $g\in G$. Two representations $\pi$ and $\pi'$ of degree $k$ of a group $G$ are said to be equivalent, and we write $\pi\sim\pi'$, if there exists a matrix $S\in GL_k(\mathbb{C})$ such that, for any $g\in G$, one has $\pi'(g)=S^{-1}\pi(g)S$. This is an equivalence relation and it is known that, for a finite group $G$, each class contains a unitary representative: this is the reason why working with unitary representations is not restrictive in the finite case. \\
\indent We will denote by $I_k$  the identity matrix of size $k$.
 The \textit{kernel} of a representation $\pi$ is $\ker \pi=\{g\in G:\, \pi(g)=I_k\}$, and a representation is said to be \textit{faithful} if $\ker \pi=\{1_G\}$.
The \textit{character} $\chi_\pi$ of a representation $\pi$ is the map $\chi_{\pi}:G\to \mathbb{C}$ defined by $\chi_\pi(g)=Tr(\pi(g))$, that is, the trace of the matrix $\pi(g)$. It is a well known fact that two representations have the same character if and only if they are equivalent.
\\
\indent
Given two representations $\pi_1$ and $\pi_2$ of $G$,  one can construct \emph{the direct sum representation $\pi=\pi_1 \oplus \pi_2$ of $G$},  defined by $\pi(g):=\pi_1(g)\oplus \pi_2(g)$, for every $g\in G$, where $\pi_1(g)\oplus \pi_2(g)$ is the direct sum of the matrices $\pi_1(g)$ and $\pi_2(g)$. We will use the notation $\pi^{\oplus i}$ for the $i$-th iterated direct sum of $\pi$ with itself.
\\
\indent A representation $\pi$ of $G$ is said to be \textit{irreducible} if there is no non-trivial invariant subspace for the action of $G$. It is proven that $\pi$ is irreducible if and only if it is not equivalent to any direct sum of representations.  It is well known that if $G$ has $m$ conjugacy classes, then there exists a list of $m$ irreducible, pairwise  inequivalent, representations $\pi_0,\ldots,\pi_{m-1}$, called a \emph{complete system of irreducible representations of $G$}. Moreover, for every representation $\pi$ of $G$, there exist $k_0,\ldots,k_{m-1} \in \mathbb N \cup \{0\}$ such that
\begin{equation*}\label{deco}
\pi\sim \bigoplus_{i=0}^{m-1} \pi_i^{\oplus k_i}\quad \mbox{ or, equivalently, } \quad \chi_\pi=\sum_{i=0}^{m-1} k_i \chi_{\pi_i}.
\end{equation*}
We say that $\pi$ \emph{contains} the irreducible representation $\pi_i$ with multiplicity $k_i$  if  $k_i\neq 0$, or also that $\pi_i$ is a \emph{subrepresentation} of $\pi$.
\\
\indent
Now we recall some remarkable representations of $G$. The first one is the trivial representation $\pi_0\colon G\to \mathbb C$, with $\pi_0(g)=1$ for every $g\in G$. It is unitary and irreducible and it  always belongs to  a complete system of unitary irreducible representations of $G$. \\
\indent The group $G$  naturally acts by left multiplication on itself. This action can be regarded as the action of $G$   on the vector space
$\mathbb{C}G=\{\sum_{x\in G} c_x x: \ c_x\in \mathbb{C}\}$. When $G$ is finite, this gives rise to the \emph{left regular representation} $\lambda_G$, which is faithful and has degree $|G|$.
It is well known that  $\lambda_G$ contains, up to equivalence, each irreducible representation $\pi_i$ of $G$ with multiplicity $\deg \pi_i$ and its character is non-zero only on $1_G$. In formulae, we have:
\begin{equation*}\label{regdec}
\lambda_G \sim \bigoplus_{i=0}^{m-1} \pi_i^{\oplus \deg \pi_i}\quad \mbox{ and } \quad \chi_{\lambda_G}(g)=
\begin{cases}
|G| &\mbox{ if } g=1_G\\
0 &\mbox{ otherwise.}
\end{cases}
\end{equation*}
For further information and a more general discussion on group representation theory, we refer the interested reader to \cite{fulton}.

Even when $G$ is not finite, the space of all finite $\mathbb C$-linear combinations of elements of $G$ is a vector space, denoted by $\mathbb{C}G$.
Endowed with the product
$$
\left(\sum_{x\in G} f_x x\right) \left(\sum_{y\in G} h_y y\right):= \sum_{x,y\in G} f_x h_y \, x y, \qquad \mbox{ for each } f=\sum_{x\in G} f_x x,\;h=\sum_{y\in G} h_y y\in \mathbb CG,
$$
and with the involution
$$
f^*:= \sum_{x\in G} \overline{f_{x^{-1}} }x,
$$
it is an algebra with involution, known as \emph{group algebra} of $G$.

A representation $\pi$ of degree $k$ of $G$ can be naturally extended by linearity to $\mathbb{C}G$. With a little abuse of notation, we denote with $\pi$ this extension, that is in fact a homomorphism of algebras:
\begin{equation*}
\begin{split}
 \pi\colon \mathbb{C}G&\to M_k(\mathbb C)\\
\sum_{x\in G} f_x x&\mapsto \sum_{x\in G} f_x \pi(x).
\end{split}
\end{equation*}
Moreover, if $\pi$ is unitary, then $\pi(f^*)=\pi(f)^*$ for any $f\in\mathbb CG$.

Also the space $M_n(\mathbb{C}G)$ of the group algebra valued matrices $n\times n$ is an algebra with involution. An element  $F \in M_n(\mathbb C G)$ is in fact a square matrix of size $n$ whose entry $F_{i,j}$ is an element of $\mathbb C G$. The product  of $F,H\in M_n(\mathbb{C}G)$ is such that $(F H)_{i,j}=\sum_{k=1}^n F_{i,k} H_{k,j},$
where $F_{i,k} H_{k,j}$ is the product in the group algebra $\mathbb C G$, and the involution $^*$ in $M_n(\mathbb C G)$ is defined by $(F^*)_{i,j}=(F_{j,i})^*$, where the $^*$ on the right is the involution in $\mathbb C G$.

 A representation $\pi$ of degree $k$ of $G$  can be also extended by linearity  to $M_n(\mathbb{C}G)$:
 $$\pi\colon M_n(\mathbb{C}G)\to M_{nk}(\mathbb C).$$
 For $A\in M_n(\mathbb{C}G)$, the element $\pi(A)\in M_{nk}(\mathbb C)$ is called the \emph{Fourier transform of $A$ at $\pi$}.
 Roughly speaking, $\pi(A)$ is the matrix obtained from the matrix  $A\in M_{n}(\mathbb C G)$ by replacing each occurrence of $g\in G$ with the block $\pi(g)$ and each $0\in \mathbb C G$ with a zero block of size $k$. One can check that also this extension of $\pi$ is a homomorphism of algebras (see, e.g., \cite{JACO}).
 Moreover, equivalence relations and subrepresentations are also preserved in their extensions to $M_n(\mathbb{C}G)$ as the next proposition claims.
\begin{proposition}\label{productfou}{\cite[Proposition 3.4]{JACO}}
Let $A \in M_n(\mathbb C G)$ and let $\pi,\pi'$ be two representations. Then:
\begin{itemize}
\item if $\pi \sim \pi'$ then the matrices $\pi(A)$ and $\pi'(A)$ are similar;
\item the matrix $(\pi\oplus\pi')(A)$ is similar to the matrix $\pi(A)\oplus \pi'(A)$.
\end{itemize}
Moreover, if $\pi$ is unitary, one has
$\pi(A^*)=\pi(A)^*.$
\end{proposition}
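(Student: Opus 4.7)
The plan is to reduce all three assertions to statements about single group algebra elements in $\mathbb{C}G$, and then lift them entry-wise to $M_n(\mathbb C G)$, handling the block bookkeeping with either a block-diagonal conjugation or a permutation matrix.

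First I would prove each claim for $a=\sum_{x\in G} c_x x\in \mathbb{C}G$. For the first item, if $\pi'(g)=S^{-1}\pi(g)S$ for all $g\in G$ then, by linearity of the extension,
\[
\pi'(a)=\sum_{x\in G} c_x\, \pi'(x)=S^{-1}\!\left(\sum_{x\in G} c_x\,\pi(x)\right)\! S=S^{-1}\pi(a)S.
\]
For the second item, $(\pi\oplus\pi')(g)=\pi(g)\oplus\pi'(g)$ by definition, so linearity gives $(\pi\oplus\pi')(a)=\pi(a)\oplus\pi'(a)$ directly. For the unitarity statement, applying $\pi$ to $a^*=\sum_{x\in G}\overline{c_{x^{-1}}}\, x$ and using $\pi(x^{-1})=\pi(x)^{-1}=\pi(x)^*$ yields
\[
\pi(a^*)=\sum_{x\in G}\overline{c_{x^{-1}}}\,\pi(x)=\sum_{y\in G}\overline{c_y}\,\pi(y^{-1})=\sum_{y\in G}\overline{c_y}\,\pi(y)^*=\pi(a)^*.
\]

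Next I would promote these identities from $\mathbb{C}G$ to $M_n(\mathbb{C}G)$ block by block. For the first item, if $T:=I_n\otimes S$ denotes the $nk\times nk$ block-diagonal matrix with $n$ copies of $S$ on the diagonal, then the $(i,j)$-block of $T^{-1}\pi(A)T$ equals $S^{-1}\pi(A_{ij})S=\pi'(A_{ij})$, which is exactly the $(i,j)$-block of $\pi'(A)$; hence $\pi'(A)=T^{-1}\pi(A)T$. For the unitarity claim, $(\pi(A^*))_{ij}=\pi((A_{ji})^*)=\pi(A_{ji})^*=(\pi(A)^*)_{ij}$ using the previous step, so the matrix identity follows.

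The only real bit of bookkeeping is the second item, because although $(\pi\oplus\pi')(A_{ij})=\pi(A_{ij})\oplus\pi'(A_{ij})$ holds block-wise, this is not literally $\pi(A)\oplus\pi'(A)$: the former groups the two summands inside each of the $n^2$ blocks, while the latter groups one full $nk$-sized copy and then one full $nk'$-sized copy. I would fix this by exhibiting the explicit permutation of basis vectors that regroups the $\pi$-coordinates before the $\pi'$-coordinates; concretely, if $P$ is the permutation matrix of the shuffle sending the ordered basis $\{e_1^{(\pi)},\ldots,e_k^{(\pi)},e_1^{(\pi')},\ldots,e_{k'}^{(\pi')}\}_{\text{repeated }n\text{ times}}$ to $\{e_1^{(\pi)},\ldots,e_k^{(\pi)}\}_{\text{repeated }n\text{ times}}$ followed by $\{e_1^{(\pi')},\ldots,e_{k'}^{(\pi')}\}_{\text{repeated }n\text{ times}}$, then $P^{-1}(\pi\oplus\pi')(A)P=\pi(A)\oplus\pi'(A)$. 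I expect this permutation argument to be the main (minor) obstacle, purely because making the indexing unambiguous is slightly tedious; everything else is a direct consequence of the algebra homomorphism property of $\pi$ on $\mathbb{C}G$ established in the preliminaries.
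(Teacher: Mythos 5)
Your proof is correct: the blockwise reduction to $\mathbb{C}G$, the conjugation by the block-diagonal matrix $I_n\otimes S$, the permutation (shuffle) similarity for the direct sum, and the entry-wise verification $(\pi(A^*))_{ij}=\pi(A_{ji})^*=(\pi(A)^*)_{ij}$ are all sound. Note that the paper itself gives no proof here, citing \cite[Proposition 3.4]{JACO}; your argument is exactly the standard one that the cited result rests on, so there is nothing to add.
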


\section{Gain graphs and switching equivalence}\label{sec:3}

Let $\Gamma=(V_\Gamma,E_\Gamma)$ be a finite simple graph. If $e=\{u,v\}\in E_\Gamma$   we write $u\sim v$ and we say that $u$ and $v$ are \emph{adjacent vertices} of $\Gamma$.
 Let $G$ be a group and consider a map $\psi$ from the set of ordered pairs of adjacent vertices of $\Gamma$ to $G$,
 such that $\psi(u,v)=\psi(v,u)^{-1}$. The pair $(\Gamma,\psi)$ is said to be a \emph{$G$-gain graph}  (or equivalently, a gain graph on $G$): the graph $\Gamma$ is  called  \emph{underlying graph} and $\psi$ is called a \emph{gain function}  on $\Gamma$.
 \\When $G=\mathbb T_2=\{\pm1\}$, then a $\mathbb T_2$-gain graph, usually denoted by $(\Gamma,\sigma)$, is  called a \emph{signed graph} and the gain function $\sigma$ is  called  a \emph{signature} on $\Gamma$.
 \\Let us fix an ordering $v_1,v_2,\ldots, v_n$ in $V_\Gamma$. The adjacency matrix  of a gain graph  $(\Gamma,\psi)$ is the group algebra valued  matrix $A_{(\Gamma,\psi)}\in M_n(\mathbb C G)$ with entries
$$
(A_{(\Gamma,\Psi)})_{i,j}=
 \begin{cases}
 \psi(v_i,v_j) &\mbox{if } v_i\sim v_j;
  \\  0 &\mbox{otherwise.}
\end{cases}
$$
Notice that
$$
A_{(\Gamma,\psi)}^*=A_{(\Gamma,\psi)}.
$$
Let $W$ be a \emph{walk} of length $h$, that is, an ordered sequence of $h+1$  (not necessarily distinct) vertices of $\Gamma$, say $w_0,w_1,\ldots, w_h$, with $w_i\sim w_{i+1}$. One can define  $\psi(W)\in G$, the \emph{gain of the walk $W$}, as follows:
$$
\psi(W):=\psi(w_0,w_1)\cdots \psi(w_{h-1},w_h).
$$
A \emph{closed walk} of length $h$ is a walk of length $h$ with $w_0=w_{h}$.
In what follows, we denote by $\mathcal W^h_{i,j}(\Gamma)$ the set of walks in $\Gamma$  of length $h$ from $v_i$ to $v_j$, and by $\mathcal C^h(\Gamma):=\bigcup_{i=1}^{n} \mathcal W_{i,i}^h$ the set of all closed walks in $\Gamma$ of length $h$. Notice that
for $W_1\in\mathcal W^{h_1}_{i,j}(\Gamma)$ and $W_2\in\mathcal W^{h_2}_{j,k}(\Gamma)$ one can define the \emph{concatenation} $W_1W_2\in \mathcal W^{h_1+h_2}_{i,k}$ in the obvious way, satisfying the property $\psi(W_1W_2)=\psi(W_1)\psi(W_2)$.
Moreover, for any vertex $v\in V_\Gamma$, we denote by $\mathcal C_v$ the set of  all closed walks starting and ending at the vertex $v$, or  closed walks \emph{centered} at $v$ for short.

A gain graph $(\Gamma,\psi)$ is \emph{balanced} if $\Psi(W)=1_G$ for every closed walk $W$.
An example of a balanced gain graph is that endowed with the \emph{trivial gain function $\bold{1}_G$}, defined as $\bold{1}_G(u,v)=1_G$ for every pair of adjacent vertices $u$ and $v$.
When the underlying graph $\Gamma$ is a tree, the gain graph $(\Gamma,\psi)$ is 	automatically balanced.
\\A fundamental concept in the theory of gain graphs, inherited from the theory of signed graphs, is the \emph{switching equivalence}: two gain functions $\psi_1$ and $\psi_2$ on the same underlying graph
$\Gamma$ are switching equivalent, and we shortly write $(\Gamma,\psi_1)\sim(\Gamma,\psi_2)$, if there exists $f\colon V_\Gamma\to G$ such that
\begin{equation}\label{eqsw}
\psi_2(v_i,v_j)=f(v_i)^{-1} \psi_1(v_i,v_j)f(v_j).
\end{equation}
for any pair of adjacent vertices $v_i$ and $v_j$ of $\Gamma$. If Eq.~\eqref{eqsw} holds, we simply write $\psi_2=\psi_1^f$. We denote by $[\psi]$ the switching equivalence class of the $G$-gain function $\psi$.

It turns out that a gain graph $(\Gamma, \Psi)$ is balanced if and only if $(\Gamma, \Psi)\sim(\Gamma, \bold{1_G})$ (see \cite[Lemma~5.3]{zaslavsky1} or \cite[Lemma~1.1]{refft}). Moreover, in analogy with the complex case (see \cite[Lemma~3.2]{reff1}),
the following result holds.

\begin{theorem}{\cite[Theorem 4.1]{JACO}}\label{teo-vecchio}
Let $\psi_1$ and $\psi_2$ be two gain functions on the same underlying graph $\Gamma$, with adjacency matrices $A_1$ and $A_2\in M_n(\mathbb C G)$, respectively.
Then $(\Gamma,\psi_1)\sim(\Gamma,\psi_2)$ if and only if there exists a diagonal matrix $F\in M_n(\mathbb C G)$, with $F_{i,i}\in G$ for each $i=1,\ldots, n$, such that $F^*A_1F=A_2$.
\end{theorem}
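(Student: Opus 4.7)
The plan is to convert the switching equivalence relation directly into a matrix identity in $M_n(\mathbb{C}G)$, exploiting the embedding $G\hookrightarrow \mathbb{C}G$ and the fact that for $g\in G\subset\mathbb{C}G$ the involution acts as $g^*=g^{-1}$ (since $g=1\cdot g$ and $g^*=\overline{1}\cdot g^{-1}$). Both implications will follow from essentially the same computation carried out in opposite directions.

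For the forward direction, suppose $(\Gamma,\psi_1)\sim(\Gamma,\psi_2)$ via a switching function $f\colon V_\Gamma\to G$ satisfying \eqref{eqsw}. I would define $F\in M_n(\mathbb{C}G)$ to be the diagonal matrix with $F_{i,i}:=f(v_i)$, regarded as the basis element $1\cdot f(v_i)$ of $\mathbb{C}G$; by construction $F_{i,i}\in G$. Since $F$ is diagonal, the product $(F^*A_1F)_{i,j}$ collapses to
\[
(F^*A_1F)_{i,j}=(F_{i,i})^*(A_1)_{i,j}F_{j,j}=f(v_i)^{-1}(A_1)_{i,j}f(v_j).
\]
For adjacent $v_i,v_j$ this equals $f(v_i)^{-1}\psi_1(v_i,v_j)f(v_j)=\psi_2(v_i,v_j)=(A_2)_{i,j}$ by hypothesis; for non-adjacent pairs both sides vanish. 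Hence $F^*A_1F=A_2$.

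For the reverse direction, I would run the same computation backwards. Given a diagonal $F$ with $F_{i,i}\in G$ satisfying $F^*A_1F=A_2$, define $f\colon V_\Gamma\to G$ by $f(v_i):=F_{i,i}$. The diagonal structure of $F$ again reduces the $(i,j)$-entry of $F^*A_1F$ to $f(v_i)^{-1}\psi_1(v_i,v_j)f(v_j)$ whenever $v_i\sim v_j$, and equating with $\psi_2(v_i,v_j)=(A_2)_{i,j}$ gives exactly \eqref{eqsw}, i.e.\ $\psi_2=\psi_1^f$.

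There is no real obstacle; the only subtlety to flag is the role of the assumption $F_{i,i}\in G$ rather than $F_{i,i}\in\mathbb{C}G$. This is what guarantees $(F_{i,i})^*=(F_{i,i})^{-1}$ as a single group element and keeps the entries of $F^*A_1F$ inside $G\cup\{0\}$, so that the matrix identity can be read off entry-wise as a relation between gain functions. Without it, $F^*A_1F$ would generally have entries in $\mathbb{C}G$ which could not be matched against a gain function pointwise.
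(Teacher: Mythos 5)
Your argument is correct: since $F$ is diagonal with group-element entries, $(F^*A_1F)_{i,j}=f(v_i)^{-1}(A_1)_{i,j}f(v_j)$, and reading this entrywise (using that $g^*=g^{-1}$ in $\mathbb{C}G$ and that $A_1,A_2$ share the same zero pattern because they live on the same underlying graph $\Gamma$) gives exactly Eq.~\eqref{eqsw} in both directions. The paper itself only quotes this statement from \cite[Theorem 4.1]{JACO} without reproducing a proof, and your direct computation is the standard argument behind that result, so there is nothing to add.
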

It is worth mentioning that it is also possible  to introduce the notion of a \emph{group algebra valued Laplacian matrix of $(\Gamma,\psi)$}  obtaining the analogous result \cite[Theorem 3.5]{GLine}.
In particular, Theorem \ref{teo-vecchio} implies that the group algebra valued adjacency matrices of switching equivalent gain graphs are, in a way, similar.

Two gain graphs $(\Gamma_1,\psi_1)$ and $(\Gamma_2,\psi_2)$ are said to be \emph{switching isomorphic} if there is a graph isomorphism $\phi\colon V_{\Gamma_1}\to V_{\Gamma_2}$ such that
$(\Gamma_1,\psi_1)\sim(\Gamma_1,\psi_2\circ \phi)$, where $\psi_2\circ \phi$ is the gain function on $\Gamma_1$ such that $(\psi_2\circ \phi)(u,v)=\psi_2(\phi(u),\phi(v))$.
\begin{remark}\label{sw-isomorphism}
It is easy to see  that also the adjacency matrices of switching isomorphic gain graphs can be obtained from each other by conjugating with a suitable group algebra valued matrix.
More precisely, two gain graphs $(\Gamma_1,\psi_1)$ and $(\Gamma_2,\psi_2)$ are switching isomorphic if and only if
\begin{equation*}\label{eq:swiso}
(PF)^*A_{(\Gamma_1,\psi_1)}(PF)=A_{(\Gamma_2,\psi_2)},
\end{equation*}
for some diagonal matrix $F\in M_n(\mathbb C G)$, with $F_{i,i}\in G$ for each $i=1,\ldots, n$, and for some matrix $P\in M_n(\mathbb C G)$ whose non-zero entries are equal to $1_G$ and where the  positions of the non-zero entries correspond to those of a permutation matrix.
\end{remark}

We have given the definitions of switching equivalence and switching isomorphism and their matrix characterizations.
Now we are going to present a different description of switching equivalence in terms of the gains of  closed walks, which will be useful for the spectral investigations that we are going to develop in the next sections.
\\\indent It is known \cite[Section~2]{reff1} that, when $G$ is Abelian,
  it is possible to establish whether
$(\Gamma,\psi_1)$ and $(\Gamma, \psi_2)$
are switching equivalent just by
 looking at the  gain of the closed walks. We extend this characterization also to the non-Abelian case. For two given elements $g,h\in G$, we set $g^h = h^{-1}gh$.
\begin{theorem}\label{thm:primo}
Let $(\Gamma, \psi_1)$ and $(\Gamma, \psi_2)$ be two $G$-gain graphs on the same connected underlying graph $\Gamma$. The following are equivalent.
\begin{enumerate}[(i)]
\item $(\Gamma, \psi_1)\sim (\Gamma, \psi_2)$;
\item $\exists v\in V_\Gamma$, $\exists g_v\in G$ such that $\psi_1(W)=\psi_2(W)^{g_v}$ for all $W\in \mathcal C_v$;
\item $\forall v\in V_\Gamma$, $\exists g_v\in G$ such that $\psi_1(W)=\psi_2(W)^{g_v}$ for all $W\in \mathcal C_v$.
\end{enumerate}
\end{theorem}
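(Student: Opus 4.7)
The plan is to prove the cycle $(i) \Rightarrow (iii) \Rightarrow (ii) \Rightarrow (i)$. The implication $(iii) \Rightarrow (ii)$ is immediate, since (iii) provides such a $g_v$ for every vertex $v$, in particular for at least one.

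For $(i) \Rightarrow (iii)$, suppose $\psi_2 = \psi_1^f$ for some $f\colon V_\Gamma \to G$. For any walk $W = w_0, w_1, \dots, w_h$, substituting the switching relation into the definition of $\psi_2(W)$ produces a telescoping product
\[
\psi_2(W) = f(w_0)^{-1} \psi_1(w_0,w_1) f(w_1) f(w_1)^{-1} \psi_1(w_1,w_2) f(w_2) \cdots = f(w_0)^{-1} \psi_1(W) f(w_h).
\]
Specializing to a closed walk $W \in \mathcal{C}_v$ yields $\psi_2(W) = f(v)^{-1} \psi_1(W) f(v)$, which rearranges to $\psi_1(W) = \psi_2(W)^{f(v)^{-1}}$. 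So $g_v := f(v)^{-1}$ works for every $v$, giving (iii).

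The main work is in $(ii) \Rightarrow (i)$. Given $v$ and $g_v$ as in (ii), I will construct an explicit $f\colon V_\Gamma \to G$ witnessing switching equivalence. Set $f(v) := g_v^{-1}$, and for each other vertex $u$ choose, using connectedness, a walk $P_u$ from $v$ to $u$ and define
\[
f(u) := \psi_1(P_u)^{-1}\, g_v^{-1}\, \psi_2(P_u).
\]
The crucial step is well-definedness: if $P'_u$ is another walk from $v$ to $u$, then the concatenation $P'_u \overline{P_u}$ (where $\overline{P_u}$ denotes $P_u$ traversed backwards) is a closed walk centered at $v$. Using $\psi_i(\overline{P_u}) = \psi_i(P_u)^{-1}$ and the multiplicativity of $\psi_i$ on concatenations, the equality of the two candidate values for $f(u)$ reduces algebraically exactly to the hypothesis $\psi_1(P'_u \overline{P_u}) = \psi_2(P'_u \overline{P_u})^{g_v}$, which holds by (ii). So $f$ is well-defined and takes values in $G$.

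It then remains to verify $\psi_2(u,w) = f(u)^{-1} \psi_1(u,w) f(w)$ for every edge $\{u,w\}$. The natural choice is to take $P_w := P_u$ extended by the edge $(u,w)$; then the definition of $f(u)$ and $f(w)$ plugged into the right-hand side collapses telescopically, leaving precisely $\psi_2(u,w)$. The verification that this $f$ covers the case $u = v$ or $w = v$ is the same after taking the trivial walk $P_v$ at $v$. The main obstacle in the whole argument is the well-definedness of $f$, which is precisely where the non-Abelian conjugation appearing in (ii) (rather than mere equality of gains) is unavoidable, and where connectedness of $\Gamma$ is used.
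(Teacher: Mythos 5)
Your proof is correct, and for the key implication (ii)$\implies$(i) it takes a genuinely different route from the paper. The paper first normalizes both gain functions on a fixed spanning tree $T$ (switching each of $\psi_1,\psi_2$ so that they become trivial on $E_T$, via the standard tree-trivialization lemma cited there), adjusts the conjugating element accordingly, and then shows that a \emph{constant} switching function works, checking the switching relation only on the closed walks that cross a single non-tree edge. You instead build the switching function directly as a ``potential'': $f(u)=\psi_1(P_u)^{-1}g_v^{-1}\psi_2(P_u)$ along a walk $P_u$ from the base vertex, prove independence of the choice of $P_u$ by observing that two choices differ by a closed walk centered at $v$ (which is exactly where hypothesis (ii) enters), and then verify the edge relation by extending $P_u$ across the edge, where the product telescopes -- a computation I checked and which does yield $\psi_2(u,w)=f(u)^{-1}\psi_1(u,w)f(w)$ with the paper's convention $g^h=h^{-1}gh$ (your $g_v=f(v)^{-1}$ in (i)$\implies$(iii) is consistent with this). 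Your argument is more self-contained, since it does not invoke the spanning-tree lemma; what the paper's route buys in exchange is that it isolates exactly $|E_\Gamma|-|V_\Gamma|+1$ closed walks (one per non-tree edge) on which the conjugation hypothesis must be checked, which is precisely the content of Remark \ref{rem:preco} and what drives Corollary \ref{coro:card}. Your construction could recover that refinement by fixing tree paths as the walks $P_u$ (so that well-definedness is no longer needed and only the fundamental cycles require the hypothesis), but as written it uses all closed walks at $v$.
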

\begin{proof}
(i)$\implies$(iii)\\
Suppose that $\psi_1=\psi_2^f$ for some $f\colon V_\Gamma\to G$. For every $v\in V_\Gamma$, and for any closed walk $W\in \mathcal C_v$, with ordered vertices $v, w_1,w_2,\ldots, w_k,v$, we have:
\begin{equation}\label{eq:co}
\begin{split}
\psi_1(W)&=\psi_1(v,w_1)\psi_1(w_1,w_2)\cdots \psi_1(w_k,v)\\
&=f(v)^{-1}\psi_2(v,w_1)f(w_1)f(w_1)^{-1}\psi_2(w_1,w_2)f(w_2)f(w_2)^{-1}\cdots \psi_2(w_k,v)f(v)
\\&={f(v)}^{-1}\psi_2(v,w_1)\psi_2(w_1,w_2)\cdots \psi_2(w_k,v)f(v)
\\&=\psi_2(W)^{f(v)},
\end{split}
\end{equation}
and then setting $g_v:=f(v)$ the thesis follows.
\\
(iii)$\implies$(ii)\\
It is obvious.\\
(ii)$\implies$(i)\\
Let us fix a spanning tree $T$ of $\Gamma$  and denote by $E_T$ the subset of $E_\Gamma$
consisting of the edges of $T$. By hypothesis,  there exist $v\in V_\Gamma$ and $g_{v}\in G$  such that
$\psi_1(W)=\psi_2(W)^{g_{v}}$ for all $W\in \mathcal C_{v}$.
Since a gain graph on a tree is balanced, we can find $p,q\colon V_\Gamma\to G$ such that
\begin{equation}\label{eq:1}
\psi'_1(v_i,v_j)=\psi'_2(v_i,v_j)=1_G\quad \mbox{ if } \{v_i,v_j\}\in E_T,
\end{equation}
where $\psi'_1=\psi_1^p$  and $\psi'_2=\psi_2^q$. This construction comes from a standard technique in gain graphs  (see for example  \cite[Lemma~2.2]{reff1} or \cite[Lemma~2.5]{ELine})
inherited from that for signed graphs.
Moreover, if we set $g'_{v}:=q(v)^{-1}g_{v}p(v)$, performing  computations similar to that of Eq.~\eqref{eq:co}, for any $W\in  \mathcal C_{v}$  we get:
\begin{equation}\label{eq:proof2}
\psi'_1(W)=\psi_1(W)^{p(v)}=\psi_2(W)^{g_{v}p(v)}=\psi'_2(W)^{q(v)^{-1}g_{v}p(v)}=\psi'_2(W)^{g'_{v}}.
\end{equation}
 Notice that, since $T$ is a spanning tree, for any two vertices $v_i$ and $v_j$ adjacent in $\Gamma$ but not in $T$ (that is, $\{v_i,v_j\}\in E_\Gamma \setminus E_T$) there exists a closed walk $W_{i,j}\in \mathcal C_{v}$, visiting $v_i$ first and then $v_j$, whose all edges but  $\{v_i,v_j\}$ are in $E_T$.
By  Eq.~\eqref{eq:1} we have $\psi'_1(W_{i,j})=\psi'_1(v_i,v_j)$ and $\psi'_2(W_{i,j})=\psi'_2(v_i,v_j)$.
As a consequence of Eq.~\eqref{eq:proof2}, we obtain:
\begin{equation}\label{eq:ff}
\psi'_1(v_i,v_j)=\psi'_2(v_i,v_j)^{g'_{v}}.
\end{equation}
But clearly Eq.~\eqref{eq:ff}  trivially holds also when $\{v_i,v_j\}\in E_T$.
Therefore, if we  define $f\colon V_\Gamma\to G$ as $f(w)=g'_{v}$ for any $w\in V_\Gamma$, from Eq.~\eqref{eq:ff}  we have
$$\psi'_1={\psi'_2}^f.$$
As a consequence, $\psi'_1$ and $\psi'_2$ are switching equivalent and so $\psi_1$ and $\psi_2$ are switching equivalent by transitivity.
\end{proof}
In particular, Theorem \ref{thm:primo} says that $\psi_1$ and $\psi_2$ are switching equivalent
if and only if their gains on every closed walk are conjugated in $G$ by an element that only depends on the starting vertex of the walk.

\begin{remark}\label{rem:preco}
Notice that, in the proof of Theorem \ref{thm:primo}, for implication (ii)$\implies$(i) we actually do not need that $\psi_1(W)=\psi_2(W)^{g_v}$ for all $W\in \mathcal C_v$; fixed a spanning tree $T$, it is enough to check this condition on $|E_\Gamma|-|E_T|=|E_\Gamma|-|V_\Gamma|+1$ closed walks centered at $v$, each crossing exactly one distinct edge in $E_\Gamma\setminus E_T$.
\end{remark}
The number $|E_\Gamma|-|V_\Gamma|+1$ is usually called \emph{circuit rank} of $\Gamma$, and we denote it by $cr(\Gamma)$:  it is the cardinality of a cycle basis (see, for example, \cite{Bharary}). This observation allows us to show in Corollary \ref{coro:card} the existence of a bijection between the switching equivalence classes of gain functions on $\Gamma$ and the \emph{simultaneous conjugacy classes} of $G^{cr(\Gamma)}$.
Here $G^{cr(\Gamma)}$ denotes the $cr(\Gamma)$-th iterated Cartesian product of $G$ with itself;
the simultaneous conjugacy class of an element $\left(x_1,\ldots, x_{cr(\Gamma)}\right)\in G^{cr(\Gamma)}$ is the subset
$\left\{\left(g^{-1}x_1g,\ldots, g^{-1}x_{cr(\Gamma)}g\right), g\in G   \right\} \subset G^{cr(\Gamma)}$.

\begin{corollary}\label{coro:card}
Let $\Gamma = (V_\Gamma, E_\Gamma)$ be a connected graph with $n$ vertices and $m$ edges. There is a bijection between
the set of switching equivalence classes of $G$-gain functions on $\Gamma$ and the set of simultaneous conjugacy classes of $G^{m-n+1}$.
\end{corollary}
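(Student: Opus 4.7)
The plan is to build the bijection by choosing, inside each switching equivalence class, a canonical ``tree-gauge'' representative and reading off its gains on a cycle basis. Fix a spanning tree $T$ of $\Gamma$ and a base vertex $v \in V_\Gamma$. The edges of $E_\Gamma \setminus E_T$ enumerate as $e_1, \dots, e_{m-n+1}$, and to each $e_i$ is associated a fundamental closed walk $W_i \in \mathcal{C}_v$ obtained by going from $v$ to $e_i$ in $T$, crossing $e_i$, and returning in $T$ (with a prescribed orientation). As recalled in the proof of \Cref{thm:primo}, every switching class $[\psi]$ contains a representative $\psi'$ with $\psi'(v_i, v_j) = 1_G$ for all $\{v_i, v_j\} \in E_T$. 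I would then define
\[
\Phi \colon \{[\psi]\} \longrightarrow G^{m-n+1}/\!\sim_{\text{conj}}, \qquad \Phi([\psi]) = \bigl[(\psi'(W_1), \dots, \psi'(W_{m-n+1}))\bigr].
\]

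The first step is to check that $\Phi$ is well defined. If $\psi'_1$ and $\psi'_2$ are two tree-gauge representatives of the same switching class, then $\psi'_2 = (\psi'_1)^f$ for some $f \colon V_\Gamma \to G$, and the condition $\psi'_1(v_i, v_j) = \psi'_2(v_i, v_j) = 1_G$ on tree edges forces $f(v_i) = f(v_j)$ along every edge of $T$; since $T$ is connected, $f$ must be a constant $g \in G$. A direct computation as in Eq.~\eqref{eq:co} then gives $\psi'_2(W_i) = g^{-1} \psi'_1(W_i) g$ for every $i$, so the tuples indeed differ by simultaneous conjugation.

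For injectivity, suppose $\Phi([\psi_1]) = \Phi([\psi_2])$ with tree-gauge representatives $\psi'_1, \psi'_2$. Then there exists $g_v \in G$ such that $\psi'_1(W_i) = \psi'_2(W_i)^{g_v}$ for $i = 1, \dots, m - n + 1$. By \Cref{rem:preco}, checking the conjugacy condition on these $m - n + 1 = cr(\Gamma)$ closed walks is enough to invoke \Cref{thm:primo}(ii)$\Rightarrow$(i), yielding $(\Gamma, \psi'_1) \sim (\Gamma, \psi'_2)$ and hence $[\psi_1] = [\psi_2]$. For surjectivity, given any $(g_1, \dots, g_{m-n+1}) \in G^{m-n+1}$, define $\psi$ by $\psi(v_i, v_j) = 1_G$ on oriented tree edges and $\psi(e_i) = g_i$ on the chosen orientation of each $e_i$ (extending via $\psi(v,u) = \psi(u,v)^{-1}$); this $\psi$ is a valid gain function, it lies in a tree-gauge, and $\psi(W_i) = g_i$ by construction.

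The only delicate point, which is the main obstacle, is the well-definedness of $\Phi$: one must verify that the constant $g$ obtained from two tree-gauge representatives is the \emph{same} element of $G$ conjugating all components of the tuple simultaneously, not an $i$-dependent one. This is precisely what the connectedness of $T$ guarantees, since it propagates the equality $f(v_i) = f(v_j)$ across all vertices, and this is where the hypothesis that $\Gamma$ be connected enters the statement.
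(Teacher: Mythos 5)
Your proof is correct and follows essentially the same route as the paper: a spanning tree with fundamental closed walks $W_1,\dots,W_{m-n+1}$ centered at $v$, the map reading off their gains, surjectivity by assigning arbitrary gains to the non-tree edges, and \Cref{rem:preco} together with \Cref{thm:primo} for injectivity. The only cosmetic difference is that you normalize to tree-gauge representatives and prove well-definedness by showing the switching function must be constant, whereas the paper defines the map on all gain functions and invokes \Cref{thm:primo}(i)$\Rightarrow$(ii) directly; both arguments are sound.
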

\begin{proof}
Let us start by fixing a spanning tree $T$ and a vertex $v$  of $\Gamma$. Set $E_\Gamma\setminus E_T=\{e_1,\dots, e_{m-n+1}\}$ and let $W_1,W_2,\ldots, W_{m-n+1}$ be closed walks centered at $v$ such that the only edge in $E_\Gamma\setminus E_T$ crossed by $W_i$  is $e_i$.\\
We first define a map $\phi$ from the  $G$-gain functions on $\Gamma$ to $G^{m-n+1}$:
$$\phi(\psi)=(\psi(W_1),\psi(W_2),\ldots,\psi(W_{m-n+1}) ).$$
The map $\phi$ is surjective. In fact, for every
$h=(h_1,\ldots, h_{m-n+1})\in G^{m-n+1}$ one can define a gain function $\psi_h$ such that $\phi(\psi_h)=h$ as  follows. Let us make
$\psi_h$ be trivial on the pairs of vertices adjacent in $T$. If instead $\{u,v\}\in E_\Gamma\setminus E_T$, then there exists $i\in\{1,\ldots,{m-n+1} \}$ such that $u,v$ or $v,u$ are consecutive vertices in $W_i$. In the first case we set $\psi_h(u,v)=h_i$ (and then $\psi_h(v,u)=h_i^{-1}$), in the second case we set $\psi_h(v,u)=h_i$ (and then $\psi_h(u,v)=h_i^{-1}$). By definition of $\phi$ we have
$\phi(\psi_h)=(h_1,h_2,\ldots,h_{m-n+1})$ and then $\phi$ is surjective.

By virtue of Theorem \ref{thm:primo}  $(i)\implies(ii)$, if $(\Gamma,\psi_1)\sim (\Gamma,\psi_2)$, then $\phi(\psi_1)$ and $\phi(\psi_2)$ are simultaneously conjugated.
Vice versa, if two gain functions $\psi_1$ and $\psi_2$ are such that  $\phi(\psi_1)$ and $\phi(\psi_2)$ are simultaneously conjugated, then there exist $g_v\in G$ such that
$\psi_1(W_i)=\psi_2(W_i)^{g_v}$ for each  $i=1,\ldots, |E_\Gamma|-|V_\Gamma|+1$.
By virtue of Remark \ref{rem:preco} this is enough to ensure that  $(\Gamma,\psi_1)\sim (\Gamma,\psi_2)$.
 As a consequence, the map $\phi$ composed with the projection onto the simultaneous conjugacy class,  induces a bijection between the set of switching equivalence classes of gain functions on $\Gamma$ and the simultaneous conjugacy classes of $G^{m-n+1}$.
\end{proof}

Notice that, when $G$ is finite and Abelian, Corollary \ref{coro:card} implies that for a connected graph $\Gamma$ there are $|G|^{m-n+1}$ switching classes of $G$-gain functions on $\Gamma$. In particular there are $2^{m-n+1}$ switching classes of signed graphs on $\Gamma$ (see \cite[Proposition~3.1]{nase}).

\section{$G$-cospectrality, $\pi$-spectrum and $\pi$-cospectrality}\label{sec:4}
For classical graphs, signed graphs, and $\mathbb T$-gain graphs, there is  one natural  definition of the (adjacency) spectrum, that is in particular invariant under switching isomorphism. In those settings there is no ambiguity, essentially because the adjacency matrices are complex valued matrices.
On the contrary,  several definitions of spectrum may be associated with an element of $M_n(\mathbb C G)$ (and therefore with a $G$-gain graph)   within the framework of operator algebras  \cite{Chu}. As already done in \cite{JACO}, we will deal with the problem bringing us back to the theory of complex matrices. More precisely, as a first step, we define \emph{cospectrality} in $M_n(\mathbb C G)$ without giving an explicit definition for the spectrum in $M_n(\mathbb C G)$.
Only after fixing a unitary representation of $G$, we define the spectrum associated with that representation.

As already said, in order to define cospectrality in $M_n(\mathbb C G)$, we are inspired   from what happens in the classical setting of complex matrices.
The following lemma is a consequence of Specht's theorem  \cite{specht} for Hermitian matrices.
\begin{lemma}\label{specht}
Let $A,B\in\mathbb M_n(\mathbb C)$ be Hermitian. Then
$$A \mbox{ and } B \mbox{ are cospectral } \iff Tr(A^h)=Tr(B^h)\quad \forall h\in \mathbb N.$$
\end{lemma}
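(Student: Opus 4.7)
The plan is to prove both directions directly, using spectral theory of Hermitian matrices for the easy direction and Newton's identities for the harder direction.

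For the forward implication, I would argue as follows. Since $A$ is Hermitian it is unitarily diagonalizable, so there exists a unitary $U$ with $U^{*}AU=\mathrm{diag}(\lambda_1,\dots,\lambda_n)$, where the $\lambda_i\in\mathbb R$ are the eigenvalues of $A$ counted with multiplicity. Then $U^{*}A^{h}U=\mathrm{diag}(\lambda_1^{h},\dots,\lambda_n^{h})$, so $Tr(A^{h})=\sum_{i=1}^{n}\lambda_i^{h}$. The same reasoning applies to $B$. If $A$ and $B$ are cospectral the two multisets $\{\lambda_i\}$ and $\{\mu_i\}$ coincide, hence the power sums agree for every $h\in\mathbb N$.

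For the converse, set $p_h=\sum_{i=1}^{n}\lambda_i^{h}=Tr(A^{h})$ and $q_h=\sum_{i=1}^{n}\mu_i^{h}=Tr(B^{h})$, and assume $p_h=q_h$ for all $h\in\mathbb N$. The key tool is \emph{Newton's identities}, which express the elementary symmetric polynomials $e_1,\dots,e_n$ of a set of $n$ variables recursively in terms of the power sums $p_1,\dots,p_n$: concretely, $k\,e_k=\sum_{j=1}^{k}(-1)^{j-1}e_{k-j}\,p_j$. Applying this recursion to $\{\lambda_i\}$ and to $\{\mu_i\}$, and using $p_h=q_h$ for $h=1,\dots,n$, I obtain $e_k(\lambda_1,\dots,\lambda_n)=e_k(\mu_1,\dots,\mu_n)$ for every $k=1,\dots,n$. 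But these elementary symmetric polynomials are, up to sign, the coefficients of the characteristic polynomials of $A$ and $B$; thus the two characteristic polynomials coincide, and consequently the eigenvalues (with multiplicities) coincide, i.e., $A$ and $B$ are cospectral.

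The only delicate point is Newton's identities, which is a standard fact from the theory of symmetric functions and can simply be cited. It is worth noting that Hermiticity is used in the forward direction only to give a clean diagonalization, while in the backward direction one really only needs that $Tr(A^h)$ equals the sum of the $h$-th powers of the eigenvalues (which holds for any complex matrix via the Jordan form); thus the Hermitian hypothesis ensures that ``cospectral'' unambiguously refers to the multisets of real eigenvalues, matching the framework of gain graph adjacency matrices used later in the paper.
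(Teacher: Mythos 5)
Your argument is correct. Note, however, that the paper does not prove this lemma from scratch: it presents it as a consequence of Specht's theorem for Hermitian matrices (trace conditions on words in $A$ and $A^{*}$ characterizing unitary similarity, which for Hermitian matrices reduces to powers of $A$ and to cospectrality), and only mentions in passing, with a citation to Robinson, the elementary-symmetric-polynomial route that you carry out in detail. So your proof follows the paper's ``alternative'' path rather than its primary one, and it is in fact the more self-contained and slightly stronger option: the forward direction by unitary diagonalization is immediate, and the converse via Newton's identities $k\,e_k=\sum_{j=1}^{k}(-1)^{j-1}e_{k-j}\,p_j$ (legitimate over $\mathbb C$, where division by $k$ is harmless) recovers the coefficients of the characteristic polynomial from just the first $n$ power traces, so you really only need $h=1,\dots,n$ rather than all $h\in\mathbb N$, and Hermiticity is not needed for that direction at all --- exactly as you observe, since $Tr(A^{h})$ is the $h$-th power sum of the eigenvalues for any complex matrix. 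What the Specht-based phrasing buys the paper is brevity and a natural fit with the Hermitian setting of gain-graph adjacency matrices; what your route buys is an elementary, quantitatively sharper statement that does not rely on the machinery of unitary invariants.
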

Alternatively, elementary symmetric polynomials can be used to prove that, for $A,B\in M_n(\mathbb C)$, one has that $Tr(A^h)=Tr(B^h)$ for $h=1,\ldots,n$ is equivalent to the cospectrality of $A$ and $B$, see \cite{rob}.

Observe that if $A$ is the adjacency matrix of a graph $\Gamma=(V_\Gamma,E_\Gamma)$, then the entry  $(A^h)_{i,j}$ equals the number of walks of length $h$ from $v_i$ to $v_j$. As a consequence, looking at $i=j$, one has:
\begin{equation}\label{eq:nogain}
\Gamma_1 \mbox{ and }  \Gamma_2 \mbox{ are cospectral graphs } \iff |\mathcal C^h(\Gamma_1)|=|\mathcal C^h(\Gamma_2)|\quad \forall h\in \mathbb N.
\end{equation}

Keeping  this  characterization in mind, let us introduce a group algebra valued trace map on $M_n(\mathbb C G)$ as follows:
\begin{equation}\label{eq:trace}
\begin{split}
Tr\colon M_n(\mathbb C G)&\to \mathbb C G\\
A&\mapsto \sum_{i=1}^n A_{i,i}.
\end{split}
\end{equation}
If $A\in  M_n(\mathbb C G)$ is the adjacency matrix of a $G$-gain graph $(\Gamma,\psi)$, by virtue of \cite[Lemma~4.1]{JACO}  the entry $(A^h)_{i,j}\in\mathbb CG$ is the sum of the gains of  all walks from $v_i$ to $v_j$ of length $h$.
In particular, we have:
\begin{equation}\label{eq:trko}
Tr(A^h)=\sum_{W \in\mathcal C^h(\Gamma)} \psi(W).
\end{equation}

We might be tempted to define the cospectrality of  $A,B\in M_n(\mathbb C G)$ by requiring the equality of $Tr(A^h)$ and $Tr(B^h)$ in  $\mathbb C G$ for all $h\in \mathbb N$ and, therefore, to define the cospectrality of two $G$-gain graphs $(\Gamma_1,\psi_1)$ and $(\Gamma_2,\psi_2)$ by requiring the equality of the sum of the gains of  all closed walks of length $h$, for all $h\in \mathbb N$.
However, this request when the group $G$ is not Abelian is  too strong. In fact the  map $Tr\colon M_n(\mathbb C G)\to \mathbb C G$, defined in Eq.~\eqref{eq:trace}, is not invariant under conjugation and there are pairs of switching isomorphic (even switching equivalent) graphs which would not be cospectral in this sense (see Section \ref{sec:cicli}).
What we really want from two cospectral matrices $A,B\in M_n(\mathbb C G)$ is that $Tr(A^h)$ and $Tr(B^h)$ are equal only up to group conjugations of some of their addends.

Let us denote by $[G]$ the set of conjugacy classes of $G$,
and by $[g]$ the conjugacy class of $g\in G$. A \emph{class function} $f\colon G\to \mathbb C$ is a map such that $g_1,g_2\in[g]\implies f(g_1)=f(g_2)$. The set  $\mathbb C_{Class}[G]$ of finitely supported class functions is a $\mathbb C$-vector space.
Moreover, if $G$ is finite, the vector space $\mathbb C_{Class}[G]$ can be endowed with a Hermitian inner product:
\begin{equation}\label{eq:pro}
\begin{split}
&\langle\;,\;\rangle\colon  \mathbb C_{Class}[G]\times  \mathbb C_{Class}[G] \to \mathbb C\\
&\langle f  , h \rangle=\frac{1}{|G|}\sum_{g\in G}  f(g) \overline{h(g)}.\\
\end{split}
\end{equation}

There is a natural map $\mu$ from $\mathbb CG$ to $\mathbb C_{Class}[G]$, defined as  the sum of the coefficients on each conjugacy class:
\begin{equation}\label{eq:projection}
\begin{split}
&\mu\colon \mathbb C G\to \mathbb C_{Class}[G]\\
 &\mu\left(\sum_{x\in G} a_x x\right)(g)=\sum_{x\in[g]} a_x.
\end{split}
\end{equation}
Notice that, if $G$ is Abelian, each conjugacy class contains only one element and $\mu$ is nothing but an isomorphism between $\mathbb C$-vector spaces.

\begin{definition}\label{def:cosp}
Two group algebra valued matrices $A,B\in M_n(\mathbb C G)$, with $A^*=A$ and $B^*=B$, are \emph{$G$-cospectral} if and only if
$$\mu(Tr(A^h))=\mu(Tr(B^h))\quad \forall h\in \mathbb N.$$
Two gain graphs $(\Gamma_1,\psi_1)$ and $(\Gamma_2,\psi_2)$ are \emph{$G$-cospectral} if $A_{(\Gamma_1,\psi_1)}$ and $A_{(\Gamma_2,\psi_2)}$ are $G$-cospectral.
\end{definition}
Our first result consists in reformulating $G$-cospectrality in terms of the gains of closed walks, in analogy with the classical case (see Eq.~\eqref{eq:nogain}).

\begin{proposition}\label{prop:cicli}
Two gain graphs $(\Gamma_1,\psi_1)$ and $(\Gamma_2,\psi_2)$ are $G$-cospectral if and only if
\begin{equation}\label{eq:prop}
\left|\{W\in \mathcal C^h(\Gamma_1): \psi_1(W)\in [g]\} \right|
= \left|\{W\in \mathcal C^h(\Gamma_2): \psi_2(W)\in [g]\} \right|,\quad \forall [g]\in[G],\;\forall h\in\mathbb N.
\end{equation}
\end{proposition}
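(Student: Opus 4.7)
The plan is simply to unpack the definition of $G$-cospectrality using the walk interpretation of the trace already provided by Eq.~\eqref{eq:trko} and the definition of $\mu$ in Eq.~\eqref{eq:projection}. The proof should be bidirectional but both directions will follow from the same computation, so no separate cases are really needed.

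First I would expand $Tr(A_{(\Gamma_i,\psi_i)}^h)$ as an element of $\mathbb{C}G$ using Eq.~\eqref{eq:trko}, regrouping closed walks by their gain:
\[
Tr\bigl(A_{(\Gamma_i,\psi_i)}^h\bigr)=\sum_{W\in\mathcal C^h(\Gamma_i)}\psi_i(W)=\sum_{x\in G}\bigl|\{W\in\mathcal C^h(\Gamma_i):\psi_i(W)=x\}\bigr|\,x.
\]
Then I would apply $\mu$ and evaluate the resulting class function at an arbitrary $g\in G$; by the very definition of $\mu$ as the sum of coefficients over a conjugacy class, one obtains
\[
\mu\bigl(Tr(A_{(\Gamma_i,\psi_i)}^h)\bigr)(g)=\sum_{x\in[g]}\bigl|\{W\in\mathcal C^h(\Gamma_i):\psi_i(W)=x\}\bigr|=\bigl|\{W\in\mathcal C^h(\Gamma_i):\psi_i(W)\in[g]\}\bigr|.
\]

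At this point the proposition is immediate: two class functions on $G$ are equal iff they agree on every conjugacy class, so the identity $\mu(Tr(A_{(\Gamma_1,\psi_1)}^h))=\mu(Tr(A_{(\Gamma_2,\psi_2)}^h))$ holding for every $h\in\mathbb N$ is literally the condition~\eqref{eq:prop}. Invoking Definition~\ref{def:cosp} then closes the proof in both directions simultaneously.

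There is no real obstacle here; the statement is essentially a translation between two equivalent descriptions of the same data. The only thing worth being careful about is that $\psi(W)$ is a well-defined element of $G$ (not just of $\mathbb{C}G$) so that collecting closed walks by the conjugacy class of their gain makes sense, which is guaranteed by the definition of the gain of a walk recalled in Section~\ref{sec:3}. I would keep the argument to a short display-style computation and an appeal to the fact that a class function is determined by its values on conjugacy classes.
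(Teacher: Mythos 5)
Your proposal is correct and follows essentially the same route as the paper: expand $Tr(A^h)$ via Eq.~\eqref{eq:trko}, apply $\mu$ as in Eq.~\eqref{eq:projection} to see that its value at $g$ counts closed walks of length $h$ with gain in $[g]$, and conclude via Definition~\ref{def:cosp}. No gaps.
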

\begin{proof}
Let $A:=A_{(\Gamma_1,\psi_1)}$ and $B:=A_{(\Gamma_2,\psi_2)}$. By Definition \ref{def:cosp}, the graphs $(\Gamma_1,\psi_1)$ and $(\Gamma_2,\psi_2)$ are $G$-cospectral if and only if
$\mu(Tr(A^h))=\mu(Tr(B^h))$, for all $h\in \mathbb N$. By Eq.~\eqref{eq:trko}, this is  equivalent to
\begin{equation}\label{eq:num}
\mu\left( \sum_{W\in  \mathcal C^h(\Gamma_1)} \psi_1(W) \right)=\mu\left( \sum_{W\in  \mathcal C^h(\Gamma_2)} \psi_2(W) \right),\quad \forall h\in \mathbb N.
\end{equation}
On the other hand, by the definition of $\mu$ in Eq.~\eqref{eq:projection}, for every $g\in G$, we have:
$$\mu\left( \sum_{W\in  \mathcal C^h(\Gamma_i)} \psi_i(W) \right)(g)= \left|\{W\in \mathcal C^h(\Gamma_i): \psi_i(W)\in [g]\} \right|.$$
Therefore Eq.~\eqref{eq:num} is equivalent to Eq.~\eqref{eq:prop} and the thesis follows.
\end{proof}

A famous result of Acharya states that a signed graph is balanced if and only if it is cospectral with its underlying graph \cite{acharya}. There are several generalizations of this result \cite{adun,JACO}. Definition \ref{def:cosp} and Proposition
\ref{prop:cicli} allow us to easily prove the following.
\begin{theorem}
A gain graph $(\Gamma,\psi)$ is balanced if and only if it is $G$-cospectral with  $(\Gamma,\bold{1}_G)$.
\end{theorem}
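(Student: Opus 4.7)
The plan is to derive the statement as a direct corollary of Proposition~\ref{prop:cicli}, since that proposition rephrases $G$-cospectrality in terms of the distribution of gains of closed walks across conjugacy classes of $G$, and balance is exactly a statement about gains of closed walks.

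First I would handle the forward implication. Suppose $(\Gamma,\psi)$ is balanced, so that $\psi(W)=1_G$ for every closed walk $W$. On the other hand, the trivial gain function assigns $\bold{1}_G(W)=1_G$ to every walk, so every closed walk of $(\Gamma,\bold{1}_G)$ also has gain $1_G$. Hence, for each length $h$ and each conjugacy class $[g]\in[G]$,
\[
\left|\{W\in\mathcal C^h(\Gamma):\psi(W)\in[g]\}\right|
=\left|\{W\in\mathcal C^h(\Gamma):\bold{1}_G(W)\in[g]\}\right|
=\begin{cases} |\mathcal C^h(\Gamma)| & \text{if } [g]=\{1_G\},\\ 0 & \text{otherwise,}\end{cases}
\]
since $[1_G]=\{1_G\}$. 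By Proposition~\ref{prop:cicli} the two gain graphs are $G$-cospectral.

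For the converse, assume that $(\Gamma,\psi)$ is $G$-cospectral with $(\Gamma,\bold{1}_G)$. Applying Proposition~\ref{prop:cicli} to any $[g]\neq\{1_G\}$ and any $h\in\mathbb N$, the right-hand count vanishes (no closed walk of $(\Gamma,\bold{1}_G)$ has gain outside $\{1_G\}$), so also
\[
\left|\{W\in\mathcal C^h(\Gamma):\psi(W)\in[g]\}\right|=0.
\]
This means that for every $h$ and every closed walk $W\in\mathcal C^h(\Gamma)$ one must have $\psi(W)\in[1_G]=\{1_G\}$, i.e.\ $\psi(W)=1_G$. Since $h$ is arbitrary, this says exactly that $(\Gamma,\psi)$ is balanced.

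I do not expect any serious obstacle: the entire argument is a bookkeeping exercise once Proposition~\ref{prop:cicli} is in hand, exploiting the fact that the conjugacy class of $1_G$ is a singleton so that the trivial gain forces all the ``mass'' of $\mu(Tr(A^h))$ onto the single class $\{1_G\}$. The only point that deserves a brief remark is the trivial observation $[1_G]=\{1_G\}$, which is what makes the direction ``$G$-cospectral with $(\Gamma,\bold{1}_G)\Rightarrow$ balanced'' work even though $G$-cospectrality is only a statement up to conjugation.
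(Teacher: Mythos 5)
Your proposal is correct and follows essentially the same route as the paper: both deduce the statement directly from Proposition~\ref{prop:cicli}, using that every closed walk of $(\Gamma,\bold{1}_G)$ has gain $1_G$ and that $[1_G]=\{1_G\}$, so the walk-count equalities force $\psi(W)=1_G$ for all closed walks. Your write-up merely makes the singleton-conjugacy-class observation explicit, which the paper leaves implicit.
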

\begin{proof}
It is clear that all closed walks of length $h$ in $(\Gamma,\bold{1}_G)$ have gain $1_G$, for each $h\in\mathbb N$. Then, by Proposition \ref{prop:cicli},
$(\Gamma,\psi)$ is $G$-cospectral with $(\Gamma,\bold{1}_G)$ if and only if also all closed walks of length $h$ in $(\Gamma,\psi)$ have gain $1_G$, for each $h\in\mathbb N$, that is, if and only if $(\Gamma,\psi)$ is balanced.
\end{proof}

Combining Theorem \ref{thm:primo} with Proposition \ref{prop:cicli}, we deduce that if $(\Gamma,\psi_1)\sim(\Gamma,\psi_2)$, then $(\Gamma,\psi_1)$ and $(\Gamma,\psi_2)$ are $G$-cospectral. Actually, the following result guarantees that  the property of $G$-cospectrality  is invariant under switching isomorphism.

\begin{theorem}\label{teo:benposto}
If two $G$-gain graphs $(\Gamma_1,\psi_1)$ and $(\Gamma_2,\psi_2)$ are switching isomorphic, then they are
$G$-cospectral.
\end{theorem}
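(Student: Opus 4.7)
My plan is to reduce this to the results on closed walks already proved in the paper, using Proposition \ref{prop:cicli} as the bridge. By Proposition \ref{prop:cicli}, it suffices to show that for every $h \in \mathbb{N}$ and every conjugacy class $[g] \in [G]$,
\[
\bigl|\{W \in \mathcal{C}^h(\Gamma_1) : \psi_1(W) \in [g]\}\bigr| = \bigl|\{W \in \mathcal{C}^h(\Gamma_2) : \psi_2(W) \in [g]\}\bigr|.
\]
I would unpack the definition of switching isomorphism into its two constituent pieces: the graph isomorphism $\phi\colon V_{\Gamma_1}\to V_{\Gamma_2}$ and the switching equivalence $(\Gamma_1,\psi_1)\sim(\Gamma_1,\psi_2\circ\phi)$ on the common underlying graph $\Gamma_1$. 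I would then handle these two pieces separately and combine them.

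First, for the graph isomorphism part, the map $\phi$ induces, for every $h$, a bijection $\Phi\colon \mathcal{C}^h(\Gamma_1)\to\mathcal{C}^h(\Gamma_2)$ defined vertex-wise; moreover, by the very definition of the pulled-back gain, $(\psi_2\circ\phi)(W) = \psi_2(\Phi(W))$ for every $W \in \mathcal{C}^h(\Gamma_1)$. In particular, for each conjugacy class $[g]$, $\Phi$ restricts to a bijection between $\{W\in\mathcal{C}^h(\Gamma_1) : (\psi_2\circ\phi)(W)\in[g]\}$ and $\{W\in\mathcal{C}^h(\Gamma_2) : \psi_2(W)\in[g]\}$, so these sets have the same cardinality.

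Second, for the switching equivalence $(\Gamma_1,\psi_1)\sim(\Gamma_1,\psi_2\circ\phi)$, Theorem \ref{thm:primo}(i)$\Rightarrow$(iii) (applied on each connected component of $\Gamma_1$ separately, since the theorem is stated for connected graphs) yields, for every $v \in V_{\Gamma_1}$, an element $g_v\in G$ such that $\psi_1(W) = (\psi_2\circ\phi)(W)^{g_v}$ for every $W \in \mathcal{C}_v$. In particular, $\psi_1(W)$ and $(\psi_2\circ\phi)(W)$ lie in the same conjugacy class for every closed walk $W$ in $\Gamma_1$ (since every closed walk is centered at some vertex). Partitioning $\mathcal{C}^h(\Gamma_1) = \bigsqcup_{v\in V_{\Gamma_1}} \mathcal{C}_v \cap \mathcal{C}^h(\Gamma_1)$ and summing over $v$ gives, for each $[g]$,
\[
\bigl|\{W\in\mathcal{C}^h(\Gamma_1) : \psi_1(W)\in[g]\}\bigr| = \bigl|\{W\in\mathcal{C}^h(\Gamma_1) : (\psi_2\circ\phi)(W)\in[g]\}\bigr|.
\]
Chaining this equality with the one obtained from the graph isomorphism yields exactly the condition of Proposition \ref{prop:cicli}, proving $G$-cospectrality.

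There is no serious obstacle here: both ingredients (closed-walk translation of $G$-cospectrality, and closed-walk translation of switching equivalence) have already been established. The only mild technical point is that Theorem \ref{thm:primo} is formulated for connected underlying graphs, so to be careful I would either state beforehand that the result reduces to the connected case (since both $\mathcal{C}^h$ and the switching equivalence decompose naturally across components), or argue a componentwise version. An alternative, more matrix-theoretic route would use Remark \ref{sw-isomorphism} to write $A_{(\Gamma_2,\psi_2)} = (PF)^* A_{(\Gamma_1,\psi_1)} (PF)$, observe that $(PF)^*(PF) = I_n$, and combine this with the cyclicity of $\mu\circ Tr$ (which follows from $\mu(gh)=\mu(hg)$ on class functions); however, the closed-walk approach above is more in line with how the paper sets things up and is the cleaner route.
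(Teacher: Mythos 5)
Your proposal is correct and follows essentially the same route as the paper: reduce to the closed-walk criterion of Proposition \ref{prop:cicli}, use the isomorphism $\phi$ to transport closed walks of $\Gamma_1$ bijectively to $\Gamma_2$ while preserving gains, and invoke Theorem \ref{thm:primo} on $(\Gamma_1,\psi_1)\sim(\Gamma_1,\psi_2\circ\phi)$ to conclude that corresponding gains are conjugate. Your extra care about the connectedness hypothesis in Theorem \ref{thm:primo} (handled componentwise, or by noting that the implication (i)$\Rightarrow$(iii) is purely local) is a reasonable refinement of the paper's argument but does not change its substance.
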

\begin{proof}
The isomorphism $\phi$ from $\Gamma_1$ to $\Gamma_2$ induces a bijection $\phi$ from $\mathcal C^h(\Gamma_1)$ to  $\mathcal C^h(\Gamma_2)$, for all $h\in\mathbb N$. In particular, if $W$ is a closed walk of length $h$ in $\Gamma_1$, $\phi(W)$ is a closed walk of length $h$ in $\Gamma_2$.
Moreover, by definition of switching isomorphism,
$(\Gamma_1,\psi_1)\sim(\Gamma_1,\psi_2\circ \phi)$
and then, by Theorem \ref{thm:primo}, the elements $\psi_1(W)$ and $\psi_2(\phi(W))$ belong to the same conjugacy class in $G$. This implies that the number of closed walks  in $\Gamma_1$ of length $h$  whose gain is in $[g]$ is equal to the number of closed walks  in $\Gamma_2$ of length $h$  whose gain is in $[g]$, for all $h\in\mathbb N$ and all $[g]\in[G]$. The thesis follows from Proposition \ref{prop:cicli}.
\end{proof}
As one would expect, the converse of Theorem \ref{teo:benposto} is not true: there are pairs of $G$-gain graphs that are $G$-cospectral but  non-switching isomorphic.
One can construct such pairs by considering two non-isomorphic cospectral  graphs $\Gamma_1$ and $\Gamma_2$. Clearly $(\Gamma_1, \bold{1_G})$ and $(\Gamma_2, \bold{1_G})$ are  $G$-cospectral but  non-switching isomorphic  (see  \Cref{exa:1,exa:nuovofine}  for  non-trivial examples).
On the other extreme, there exist  $G$-gain graphs determined by their $G$-spectrum (see Corollary \ref{coro:det}), according to the following definition.

\begin{definition}
A $G$-gain graph $(\Gamma,\psi)$ is \emph{determined by its $G$-spectrum} if
$$(\Gamma,\psi)\mbox{ and }(\Gamma',\psi')  \mbox{ $G$-cospectral }  \implies (\Gamma,\psi) \mbox{ and } (\Gamma',\psi') \mbox{  switching isomorphic.}$$
\end{definition}
In the rest of this section, we recall the notion of spectrum with respect a unitary representation from \cite{JACO} and investigate its relation with  $G$-cospectrality.

For a representation $\pi$  of $G$  of degree $k$ we denote by $\chi_\pi\colon G\to \mathbb C$ its  character. Notice that $\chi_\pi(1_G)= \deg \pi = k$. As already mentioned, the representation
$\pi$ can be extended to $\mathbb C G$ and to $M_n(\mathbb C G)$ via Fourier transforms.
\\ \indent Similarly, the character $\chi_\pi$ can be linearly extended  to $\mathbb C G$, and to finitely supported functions in $\mathbb C_{class} [G]$:
\begin{equation}\label{eq:cara}
\begin{split}
\chi_\pi\left(\sum_{x\in G} a_x x\right)&:=Tr\left( \pi\left(\sum_{x\in G} a_x x\right)\right)=\sum_{x\in G} a_x \chi_\pi(x),\qquad \mbox{with }\sum_{x\in G} a_x x\in \mathbb C G;\\
\chi_\pi(h)&:= \sum_{x\in G} h(x) \chi_\pi(x)=\sum_{[g]\in [G]}|[g]|h(g)\chi_\pi(g),\qquad \mbox{with } h\in\mathbb C_{Class} [G].
\end{split}
\end{equation}

Consider a $G$-gain graph $(\Gamma,\psi)$ with adjacency matrix $A\in\mathbb M_n(\mathbb C G)$. The matrix $A_\pi:=\pi(A)\in M_{nk}(\mathbb C)$ is called the \emph{represented adjacency matrix} of
$(\Gamma,\psi)$ with respect to $\pi$.  By Proposition \ref{productfou},  $A_\pi$ is a Hermitian matrix and we say that its spectrum is the \emph{$\pi$-spectrum of $(\Gamma,\psi)$}, denoted with $\sigma_\pi(\Gamma,\psi)$ or also $\sigma_\pi(A)$. Notice that, if $\pi$ is unitary, then the $\pi$-spectrum is real. Moreover, by Proposition \ref{productfou}, if $\pi\sim\pi'$, then $\sigma_\pi(A)=\sigma_{\pi'}(A)$.
\begin{definition}
Let $(\Gamma_1,\psi_1)$ and $(\Gamma_2,\psi_2)$ be two $G$-gain graphs. Let $\pi$ be a unitary representation of $G$.
The graphs $(\Gamma_1,\psi_1)$ and $(\Gamma_2,\psi_2)$ are said to be $\pi$-cospectral if $\sigma_\pi(\Gamma_1,\psi_1)=\sigma_\pi(\Gamma_2,\psi_2)$.
\end{definition}
Although it was formally introduced only in \cite{JACO}, particular cases of $\pi$-spectra  have often been considered in the literature, as shown in the next examples.
\begin{example}\label{ex:1}
Let $(\Gamma,\psi)$ be a $G$-gain graph and let $\pi_0$ be the trivial representation of $G$, that is, the one-dimensional representation such that
$\pi_0(g)=1$ for any $g\in G$. Then $A_{\pi_0}$ is nothing but the adjacency matrix of the
underlying graph $\Gamma$. Thus the $\pi_0$-spectrum of $(\Gamma,\psi)$ is the spectrum of its underlying graph.
\end{example}

\begin{example}\label{ex:2}
Let $(\Gamma,\sigma)$ be a signed graph and let  $\pi_{id} \colon \mathbb T_2\to \mathbb C$ be the identical one-dimensional representation of $\mathbb T_2$ with values $+1$ and $-1$.
The  represented adjacency matrix $A_{\pi_{id}}$ is the classical adjacency matrix of  the signed graph (see, e.g., \cite{zasmat}) and so  the $\pi_{id}$-spectrum is the classical spectrum of the signed graph.
\end{example}

\begin{example}\label{ex:3}
Let $(\Gamma,\psi)$ be a $\mathbb T$-gain graph and let  $\pi_{id} \colon \mathbb T\to \mathbb C$ the identical one-dimensional representation of $\mathbb T$. The  represented adjacency matrix $A_{\pi_{id}}$ is the classical adjacency matrix for  complex unit gain graphs (in the sense of \cite{reff1}) and  the $\pi_{id}$-spectrum is the classical spectrum of a  complex unit gain graph.
\end{example}

\begin{example}\label{ex:4}
Let $(\Gamma,\psi)$ be a $G$-gain graph, and let $\lambda_G$ be the left regular representation of $G$. Then the  represented adjacency matrix $A_{\lambda_G}$ is the adjacency matrix of the  \emph{(left) cover graph of $(\Gamma,\psi)$} \cite[Lemma~6.1]{JACO}, and the $\lambda_G$-spectrum of $(\Gamma,\psi)$ is the spectrum of the cover graph of $(\Gamma,\psi)$.
\end{example}

 As a first result about \emph{represented cospectrality}, in analogy with Proposition \ref{prop:cicli}, we are going to prove that the $\pi$-spectrum of a gain graph is fully identified by the characters of the gains of the closed walks.

\begin{theorem}\label{teo:pico}
Two gain graphs $(\Gamma_1,\psi_1)$ and $(\Gamma_2,\psi_2)$ are $\pi$-cospectral if and only if
\begin{equation}\label{eq:inteo}
\sum_{W \in\mathcal C^h} \chi_\pi(\psi_1(W))= \sum_{W \in\mathcal C^h} \chi_\pi(\psi_2(W)), \quad \forall h\in\mathbb N.
\end{equation}
\end{theorem}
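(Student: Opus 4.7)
My plan is to reduce the theorem to Specht's trace characterization of cospectrality (\Cref{specht}). Since $\pi$ is unitary, \Cref{productfou} guarantees that $A_{1,\pi} := \pi(A_{(\Gamma_1,\psi_1)})$ and $A_{2,\pi} := \pi(A_{(\Gamma_2,\psi_2)})$ are Hermitian matrices in $M_{nk}(\mathbb C)$. Thus by \Cref{specht}, the two gain graphs are $\pi$-cospectral if and only if $Tr(A_{1,\pi}^h)=Tr(A_{2,\pi}^h)$ for every $h\in\mathbb N$. It then suffices to establish, for an arbitrary gain graph $(\Gamma,\psi)$ with adjacency matrix $A$, the identity
$$
Tr\bigl(A_\pi^h\bigr) \;=\; \sum_{W\in\mathcal C^h(\Gamma)} \chi_\pi(\psi(W)).
$$

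The verification of this identity splits in two routine steps. First, because $\pi$ extends to an algebra homomorphism on $M_n(\mathbb C G)$ (see \Cref{productfou}), we have $A_\pi^h=\pi(A)^h=\pi(A^h)$. Using the block description of the Fourier transform, the diagonal blocks of $\pi(A^h)$ are exactly $\pi((A^h)_{i,i})$, so taking the complex trace yields
$$
Tr\bigl(\pi(A^h)\bigr) \;=\; \sum_{i=1}^{n} Tr\bigl(\pi((A^h)_{i,i})\bigr) \;=\; \sum_{i=1}^{n} \chi_\pi\bigl((A^h)_{i,i}\bigr) \;=\; \chi_\pi\bigl(Tr(A^h)\bigr),
$$
where the second equality uses the definition of the extended character given in \eqref{eq:cara}, and the last uses the $\mathbb C$-linearity of $\chi_\pi$ on $\mathbb C G$. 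Second, the walk-counting formula \eqref{eq:trko} gives $Tr(A^h)=\sum_{W\in\mathcal C^h(\Gamma)}\psi(W)$; pulling $\chi_\pi$ inside this finite sum (again by linearity) yields exactly the identity displayed above.

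Once the identity is established, the two sides of \eqref{eq:inteo} are precisely $Tr(A_{1,\pi}^h)$ and $Tr(A_{2,\pi}^h)$, and Specht's theorem delivers the equivalence. I do not foresee a serious obstacle: all heavy lifting has already been done in the preliminary sections. The only genuinely new observation is the intertwining relation $Tr\circ \pi=\chi_\pi\circ Tr$ between the scalar trace on $M_{nk}(\mathbb C)$ and the group-algebra-valued trace on $M_n(\mathbb C G)$, which is transparent from the block structure of $\pi$ on $M_n(\mathbb C G)$. This same intertwining will also be the bridge to the stronger result (\Cref{teo:cosp}) characterizing $G$-cospectrality, since it shows that every $\pi$-spectral information is a linear functional of $\mu(Tr(A^h))$.
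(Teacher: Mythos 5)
Your proposal is correct and follows essentially the same route as the paper: both reduce $\pi$-cospectrality to Specht's trace criterion (using that $A_\pi$ is Hermitian when $\pi$ is unitary) and then verify the identity $Tr(A_\pi^h)=\chi_\pi(Tr(A^h))=\sum_{W\in\mathcal C^h}\chi_\pi(\psi(W))$ via the homomorphism property of the extended $\pi$, Eq.~\eqref{eq:trko}, and Eq.~\eqref{eq:cara}. Your explicit block-diagonal justification of the intertwining $Tr\circ\pi=\chi_\pi\circ Tr$ is just a slightly more detailed spelling-out of the paper's chain of equalities.
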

\begin{proof}
Recall that the extension to $M_n(\mathbb C G)$ of a unitary representation $\pi$ of $G$ of degree $k$ is a homomorphism, and then, for every $A\in M_n(\mathbb C G)$, for any $h\in\mathbb N$, one has:
$$\pi(A^h)=\pi(A)^h.$$
If $(\Gamma,\psi)$ is a $G$-gain graph on $n$ vertices, $A=A_{(\Gamma,\psi)}\in M_n(\mathbb C G)$  is its adjacency matrix and
$A_\pi\in M_{nk}(\mathbb C)$  is its represented adjacency matrix with respect to $\pi$, then by using
Eq.~\eqref{eq:trko} and Eq.~\eqref{eq:cara} one has:
$$Tr\left((A_\pi)^h\right)=Tr\left(\pi\left( A^h\right) \right)=\chi_\pi(A^h)=\sum_{W \in\mathcal C^{h}} \chi_\pi (\psi(W)), \qquad \forall h\in \mathbb N.$$
Combining with Lemma \ref{specht} one can conclude that the represented  adjacency matrices of $(\Gamma_1,\psi_1)$ and $(\Gamma_2,\psi_2)$ are cospectral if and only if Eq.~\eqref{eq:inteo} holds.
\end{proof}
If we apply Theorem \ref{teo:pico} to the trivial representation $\pi_0$ (see Example \ref{ex:1}), we can say that the underlying graphs of two gain graphs are cospectral if and only if they have, for all $h\in\mathbb N$,  the same number of closed walks of length $h$.
For two signed graphs or  two $\mathbb T$-gain graphs with the representation $\pi_{id}$ (see  \Cref{ex:2,ex:3}),  \Cref{teo:pico} implies that they are cospectral if,  for all $h\in\mathbb N$, they share the sum of the gains of their closed walks of length $h$.
Finally, if we apply Theorem \ref{teo:pico} to the left regular representation (see Example \ref{ex:4}), we can say that
$(\Gamma_1,\psi_1)$ and $(\Gamma_2,\psi_2)$ have cospectral cover graphs  if and only if they have,  for all $h\in\mathbb N$, the same number of \emph{balanced closed walks} of length $h$, where a balanced closed walk is a walk whose gain is $1_G$, that is also the only group element whose character is non-zero.

Surprisingly,  $(\Gamma_1,\psi_1)$ and $(\Gamma_2,\psi_2)$ can be $\pi_1$-cospectral but not $\pi_2$-cospectral, even if both unitary representations $\pi_1$ and $\pi_2$ are faithful (see Example \ref{exa:2}), or even if
$\pi_1$ and $\pi_2$ are faithful and irreducible (see Example \ref{exa:s4}). The following proposition shows that,  however, it is  possible to prove $\pi$-cospectrality just by looking at the irreducible subrepresentations of $\pi$.
\begin{proposition}\label{prop:somma}
If two $G$-gain graphs $(\Gamma_1,\psi_1)$ and $(\Gamma_2,\psi_2)$ are $\pi_1$-cospectral and  $\pi_2$-cospectral, where $\pi_1$ and $\pi_2$ are unitary representations, then $(\Gamma_1,\psi_1)$ and $(\Gamma_2,\psi_2)$ are $(\pi_1\oplus \pi_2)$-cospectral.
\end{proposition}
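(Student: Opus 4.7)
The plan is to reduce the statement directly to \Cref{teo:pico} by exploiting the additivity of characters under direct sum, namely the identity $\chi_{\pi_1\oplus \pi_2}(g)=\chi_{\pi_1}(g)+\chi_{\pi_2}(g)$ for all $g\in G$, which follows from the block-diagonal form of $(\pi_1\oplus\pi_2)(g)=\pi_1(g)\oplus\pi_2(g)$ and linearity of the trace.

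Concretely, I would argue as follows. By \Cref{teo:pico}, the $\pi_1$-cospectrality and $\pi_2$-cospectrality of $(\Gamma_1,\psi_1)$ and $(\Gamma_2,\psi_2)$ translate into the two equalities
\begin{equation*}
\sum_{W\in\mathcal C^h(\Gamma_1)} \chi_{\pi_j}(\psi_1(W))=\sum_{W\in\mathcal C^h(\Gamma_2)} \chi_{\pi_j}(\psi_2(W)), \qquad j=1,2,\ \forall h\in\mathbb N.
\end{equation*}
Summing these two equations and using $\chi_{\pi_1\oplus\pi_2}=\chi_{\pi_1}+\chi_{\pi_2}$ yields
\begin{equation*}
\sum_{W\in\mathcal C^h(\Gamma_1)} \chi_{\pi_1\oplus\pi_2}(\psi_1(W))=\sum_{W\in\mathcal C^h(\Gamma_2)} \chi_{\pi_1\oplus\pi_2}(\psi_2(W)), \qquad \forall h\in\mathbb N,
\end{equation*}
so a second application of \Cref{teo:pico}, in the other direction, delivers the $(\pi_1\oplus\pi_2)$-cospectrality. (Note that $\pi_1\oplus\pi_2$ is unitary whenever $\pi_1,\pi_2$ are, so \Cref{teo:pico} applies.)

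An equally short alternative route avoids characters and uses \Cref{productfou} instead: for $A_i:=A_{(\Gamma_i,\psi_i)}$, the matrix $(\pi_1\oplus\pi_2)(A_i)$ is similar to $\pi_1(A_i)\oplus \pi_2(A_i)$, and the spectrum of a block-diagonal matrix is the multiset union of the spectra of its blocks. Hence the $(\pi_1\oplus\pi_2)$-spectrum of $(\Gamma_i,\psi_i)$ equals $\sigma_{\pi_1}(\Gamma_i,\psi_i)\cup\sigma_{\pi_2}(\Gamma_i,\psi_i)$ as a multiset, and equality for $i=1,2$ of each piece immediately gives equality of the union.

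I do not foresee a genuine obstacle: both routes are essentially a one-line consequence of material already developed in \Cref{sec:2,sec:4}. The only mild care needed is to treat spectra as multisets (so that multiplicities add correctly in the direct-sum argument), and to note that the character extension in Eq.~\eqref{eq:cara} is $\mathbb C$-linear, so that the identity $\chi_{\pi_1\oplus\pi_2}=\chi_{\pi_1}+\chi_{\pi_2}$ transfers from $G$ to the sums of gains appearing on both sides.
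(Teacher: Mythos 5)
Your proposal is correct, and your ``alternative route'' is precisely the paper's proof: the paper invokes \Cref{productfou} to get that $(\pi_1\oplus\pi_2)(A)$ is similar to $\pi_1(A)\oplus\pi_2(A)$, and then concludes that the $(\pi_1\oplus\pi_2)$-spectrum is the multiset union of the $\pi_1$- and $\pi_2$-spectra, from which the claim is immediate. Your primary, character-based argument is also valid and only superficially different: \Cref{teo:pico} is an if-and-only-if reformulation of $\pi$-cospectrality (applicable here since $\pi_1\oplus\pi_2$ is again unitary), and the identity $\chi_{\pi_1\oplus\pi_2}=\chi_{\pi_1}+\chi_{\pi_2}$ lets you add the two hypotheses and translate back; this route essentially rederives through walk-sums what the similarity statement gives directly, at the cost of relying on the full strength of \Cref{teo:pico} (both implications) rather than on the elementary fact that the spectrum of a block-diagonal Hermitian matrix is the multiset union of the spectra of its blocks. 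Your remark about treating spectra as multisets is exactly the care the paper's one-line conclusion implicitly takes.
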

\begin{proof}
By Proposition \ref{productfou} the matrix $(\pi_1\oplus \pi_2)(A)$ is similar to the matrix  $\pi_1(A)\oplus \pi_2(A)$ for any $A\in M_n(\mathbb CG)$. As a consequence, the $(\pi_1\oplus \pi_2)$-spectrum of a gain graph is the union (as multisets) of  the $\pi_1$-spectrum with the $\pi_2$-spectrum. The thesis easily follows.
\end{proof}
Notice that the converse of Proposition \ref{prop:somma} is not true, see Example \ref{exa:2}.
Anyway, as a consequence of Proposition \ref{prop:somma}, if $\pi_0,\ldots,\pi_{m-1}$ is a complete system of irreducible unitary representations of $G$, and $(\Gamma_1,\psi_1)$ and $(\Gamma_2,\psi_2)$  are $\pi_i$-cospectral for each $i=0,\ldots,m-1$, then $(\Gamma_1,\psi_1)$ and $(\Gamma_2,\psi_2)$ are $\pi$-cospectral for every unitary representation $\pi$.

The following theorem ensure that, when $G$ is finite, one can check $G$-cospectrality just by looking at cospectrality with respect to a complete system of irreducible, unitary, representations of $G$.
\begin{theorem}\label{teo:cosp}
Let $(\Gamma_1,\psi_1)$ and $(\Gamma_2,\psi_2)$ be two $G$-gain graphs, with $G$ finite. Let $\pi_0,\ldots,\pi_{m-1}$ be a complete system of irreducible, unitary, representations of $G$. The following are equivalent.
\begin{enumerate}
\item $(\Gamma_1,\psi_1)$ and $(\Gamma_2,\psi_2)$  are $G$-cospectral;
\item $(\Gamma_1,\psi_1)$ and $(\Gamma_2,\psi_2)$  are $\pi$-cospectral, for every unitary representation $\pi$ of $G$;
\item $(\Gamma_1,\psi_1)$ and $(\Gamma_2,\psi_2)$  are $\pi_i$-cospectral, for each $i=0,\ldots,m-1$.
\end{enumerate}
\end{theorem}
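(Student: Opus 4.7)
The plan is to establish the cyclic chain $(1) \Rightarrow (2) \Rightarrow (3) \Rightarrow (1)$. The implication $(2) \Rightarrow (3)$ is immediate, since each member of a complete system of irreducible unitary representations is itself a unitary representation. The substantive content lies in $(1) \Rightarrow (2)$ and $(3) \Rightarrow (1)$, both of which will be handled by translating the various cospectralities into statements about class functions on $G$ and then comparing them through the characters.

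For $(1) \Rightarrow (2)$ I would combine Proposition~\ref{prop:cicli} with Theorem~\ref{teo:pico}. For each $h \in \mathbb{N}$ and $j = 1,2$, introduce the class function
$$
f_h^{(j)} \colon G \to \mathbb{C}, \qquad f_h^{(j)}(g) := \left|\{W \in \mathcal{C}^h(\Gamma_j) : \psi_j(W) \in [g]\}\right|,
$$
so that, by Proposition~\ref{prop:cicli}, $G$-cospectrality is exactly the statement $f_h^{(1)} = f_h^{(2)}$ for every $h$. Since $\chi_\pi$ is a class function, grouping closed walks by the conjugacy class of their gain gives
$$
\sum_{W \in \mathcal{C}^h(\Gamma_j)} \chi_\pi(\psi_j(W)) = \sum_{[g] \in [G]} f_h^{(j)}(g)\, \chi_\pi(g),
$$
so the equality $f_h^{(1)} = f_h^{(2)}$ forces the two right-hand sides to agree for every unitary $\pi$, and Theorem~\ref{teo:pico} then yields $\pi$-cospectrality.

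The main step is $(3) \Rightarrow (1)$. Set $d_h := f_h^{(1)} - f_h^{(2)}$, a complex class function on $G$ supported on finitely many conjugacy classes. By Theorem~\ref{teo:pico} applied to each $\pi_i$, together with the grouping above, the hypothesis produces the linear system
$$
\sum_{[g] \in [G]} d_h(g)\, \chi_{\pi_i}(g) = 0, \qquad \forall\, i = 0, \ldots, m-1,\ \forall\, h \in \mathbb{N}.
$$
Here the finiteness of $G$ becomes essential: $\mathbb{C}_{Class}[G]$ has dimension equal to the number $m$ of conjugacy classes, and the irreducible characters $\chi_{\pi_0}, \ldots, \chi_{\pi_{m-1}}$ form a basis of it. Equivalently, the character table $M = \bigl(\chi_{\pi_i}(g)\bigr)_{i,[g]}$ is an invertible $m \times m$ matrix by the second orthogonality relation. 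The displayed equations express that $M$, applied to $d_h$ viewed as a column vector indexed by conjugacy classes, vanishes, forcing $d_h \equiv 0$; by Proposition~\ref{prop:cicli} this is $G$-cospectrality.

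I expect the only genuine obstacle to be the invertibility of the character table in the last step; everything else is a clean translation between three languages (counts of closed walks by conjugacy class, sums of characters of gains, and spectra of represented adjacency matrices). An alternative route to $(3) \Rightarrow (2)$, bypassing the character-table argument, is to invoke Proposition~\ref{prop:somma} together with the fact that every unitary representation of a finite group decomposes, up to equivalence, as a direct sum of members of $\{\pi_0, \ldots, \pi_{m-1}\}$; in any case, closing the loop back to $(1)$ relies on the character-theoretic inversion described above.
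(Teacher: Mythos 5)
Your proposal is correct and follows essentially the same route as the paper: both directions are handled by translating cospectrality into class-function identities via Proposition~\ref{prop:cicli} and Theorem~\ref{teo:pico}, with finiteness of $G$ entering through the fact that the irreducible characters span $\mathbb{C}_{Class}[G]$. The only (cosmetic) difference is that you phrase the final step as invertibility of the character table, while the paper expands the normalized class function $\overline{\mu}(Tr(A^h))$ against the orthonormal basis of irreducible characters; these are equivalent forms of the same fact.
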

\begin{proof}
(1)$\implies$(2)\\
By Proposition \ref{prop:cicli}, if $(\Gamma_1,\psi_1)$ and $(\Gamma_2,\psi_2)$ are $G$-cospectral, then
Eq.~\eqref{eq:prop} holds.
Let $\pi$ be a unitary representation. Since $\chi_{\pi}$ is a class function we have
\begin{equation}\label{eq:dimo2}
\sum_{W \in\mathcal C^h} \chi_\pi(\psi_i(W))=\sum_{[g]\in[G]} \left|\{W\in \mathcal C^h(\Gamma_i): \psi_i(W)\in [g]\} \right| \chi_{\pi}(g), \quad i=1,2;
\;\forall h\in\mathbb N.
\end{equation}
Combining Eq.~\eqref{eq:prop} with Eq.~\eqref{eq:dimo2} we have
$$\sum_{W \in\mathcal C^h} \chi_\pi(\psi_1(W))= \sum_{W \in\mathcal C^h} \chi_\pi(\psi_2(W)), \quad \forall h\in\mathbb N,$$
and  this implies (2) by Theorem \ref{teo:pico}.\\
(2)$\implies$(3)\\It is obvious.\\
(3)$\implies$(1)\\
Let $A,B\in M_n(\mathbb C G)$ be the adjacency matrices of $(\Gamma_1,\psi_1)$ and $(\Gamma_2,\psi_2)$, respectively. Since $A_{\pi_i}$ and $B_{\pi_i}$ are cospectral, by Theorem \ref{teo:pico} we have:
\begin{equation}\label{eq:dimofi2}
\chi_{\pi_i}\left(Tr\left(A^h\right)\right)=\sum_{W \in\mathcal C^{h}} \chi_{\pi_i} (\psi_1(W))= \sum_{W \in\mathcal C^{h}} \chi_{\pi_i}(\psi_2(W))=\chi_{\pi_i}\left(Tr\left(B^h\right)\right), \quad\forall h\in\mathbb N.
\end{equation}
We want to prove that
$\mu(Tr(A^h))=\mu(Tr(B^h))$ for all $h\in\mathbb N$, that is, $(\Gamma_1,\psi_1)$ and $(\Gamma_2,\psi_2)$ are $G$-cospectral. Since $G$ is finite, for any $f\in \mathbb CG$, one can define
$\overline{\mu}(f)\in \mathbb C_{Class}[G]$, that is, the \emph{normalization of $\mu(f)$}, in the following way:
$$\overline{\mu}(f)(g)=\frac{1}{|[g]|} \sum_{x\in[g]} f_x=\frac{1}{|[g]|} \mu(f)(g).$$
One can easily check that $\mu(Tr(A^h))=\mu(Tr(B^h))$ for all $h\in\mathbb N$ if and only if $\overline{\mu}(Tr(A^h))=\overline{\mu}(Tr(B^h))$ for all $h\in\mathbb N$, since both conditions are equivalent to that  of
Eq.~\eqref{eq:prop} in Proposition \ref{prop:cicli}. Moreover, since $G$ is finite, the characters $\chi_{\pi_0}, \chi_{\pi_1}, \ldots, \chi_{\pi_{m-1}}$ form an orthonormal basis of $\mathbb C_{Class}[G]$ with respect to the Hermitian product defined in Eq.~\eqref{eq:pro} (see \cite[Theorem 2.12]{fulton}).
Therefore, in order to conclude the proof, we only need to show that
\begin{equation}\label{eq:dimofinale}
\langle \chi_{\pi_i},\overline{\mu}\left(Tr\left(A^h\right)\right)\rangle
=\langle \chi_{\pi_i},\overline{\mu}\left(Tr\left(B^h\right)\right)\rangle, \quad\forall h\in\mathbb N,\ i=0,1,\ldots, m-1.
\end{equation}
Notice that  the coefficient of $Tr\left(A^h\right)$ and $Tr\left(B^h\right)$ are real, and then
$Tr\left(A^h\right), Tr\left(B^h\right)\in \mathbb R G$.
For $f\in\mathbb R G$, one  has
\begin{equation}\label{eq:ultima}
\langle \chi_{\pi_i},\overline{\mu}(f)\rangle=
\frac{1}{|G|}\sum_{g\in G} \chi_{\pi_i}(g) \overline{\frac{1}{|[g]|}\sum_{x\in [g]} f_x}
=\frac{1}{|G|} \sum_{x\in G} \chi_{\pi_i}(x) f_x= \frac{1}{|G|} \chi_{\pi_i}(f),
\end{equation}
since $\chi_{\pi_i}(x)=\chi_{\pi_i}(g)$ for all $x\in [g]$.
By gluing together Eq.~\eqref{eq:ultima} with Eq.~\eqref{eq:dimofi2} we obtain Eq.~\eqref{eq:dimofinale}, and the thesis follows.
\end{proof}

\begin{remark}\label{rem:nonleft}
As already mentioned, the left regular representation $\lambda_G$ contains every irreducible representations. However the condition for a pair of $G$-gain graphs of being $\lambda_G$-cospectral is not  equivalent to those of Theorem \ref{teo:cosp}, see Example \ref{exa:2}.
\end{remark}

\section{Examples and applications}\label{sec:5}
This section is devoted to the application of the results obtained in the previous sections to some remarkable cases: signed graphs, gain cyclic graphs, $\mathbb{T}_m$-gain graphs. Finally, we discuss a particular example where a pair of gain graphs over the symmetric group $S_4$ is considered.
\subsection{Signed graphs}
Signed graph can be regarded as $\mathbb T_2$-gain graphs, being $\mathbb T_2$ the group of order two. The irreducible representations of $\mathbb T_2$ are the trivial representation $\pi_0$, and the signed representation, that coincides with $\pi_{id}$.

In the light of   \Cref{teo:cosp} and  \Cref{ex:1,ex:2}, two signed graphs are $\mathbb T_2$-cospectral if and only if they are cospectral as signed graphs and they  have cospectral underlying graphs. A trivial example is given by a pair of cospectral  graphs (as unsigned graphs) both endowed with the all positive signature. We present now a non-trivial example.
\begin{example}\label{exa:1}
Let $(K_8,\sigma)$ be the signed graph depicted in Fig. \ref{fig:1}.
An explicit computation shows that its spectrum (that is, its $\pi_{id}$-spectrum) is
$$\{\pm 1,\pm \sqrt{5}^{ (2)},\pm \sqrt{17}\};$$
it is symmetric with respect to $0$. As a consequence,  $(K_8,\sigma)$ and  $(K_8,-\sigma)$ are cospectral as signed graphs, that is, they are $\pi_{id}$-cospectral. Moreover, $(K_8,\sigma)$ and  $(K_8,-\sigma)$  are also $\pi_0$-cospectral since they have isomorphic underlying graphs. By virtue of Theorem \ref{teo:cosp}, $(K_8,\sigma)$ and  $(K_8,-\sigma)$  are $\mathbb T_2$-cospectral, even if they are not switching isomorphic, because $(K_8,\sigma)$ is an example of a non-signsymmetric graph with symmetric spectrum (see \cite[Fig.~2]{open}, \cite{sss}).
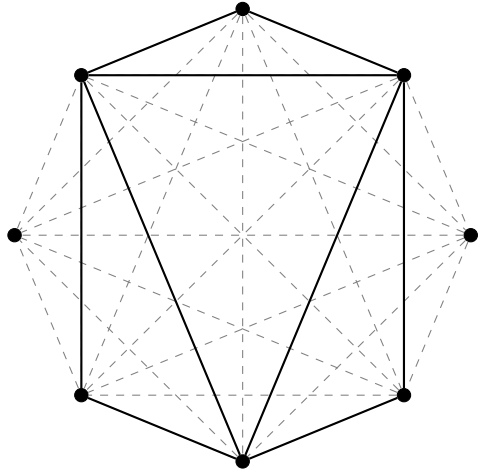
\begin{figure}
\centering
  \begin{tikzpicture}
  \graph { subgraph K_n [n=8,clockwise,empty nodes, radius=3cm,edges={dashed,gray},nodes={draw,circle,inner sep=0pt,minimum size=5pt,fill=black} ]};
    \draw (8) edge[thick] (1);
        \draw (1) edge[thick] (2);
             \draw (2) edge[thick] (4);
                 \draw (4) edge[thick] (5);
                          \draw (5) edge[thick] (6);
                                  \draw (6) edge[thick] (8);
                                           \draw (8) edge[thick] (2);
                                                    \draw (8) edge[thick] (5);
                                                       \draw (2) edge[thick] (5);
\end{tikzpicture}
\caption{The signed graph $(K_8,\sigma)$ of Example \ref{exa:1}. }\label{fig:1}
\end{figure}
\end{example}

\subsection{Cycles}\label{sec:cicli}
We are going to use the results of the previous two sections  to completely characterize  switching equivalence classes, switching isomorphism classes,  $G$-cospectrality classes and $\lambda_G$-cospectrality classes for $G$-gain graphs on the cyclic graph $C_n$ with vertices $v_1,\ldots,v_n$, for any group $G$.

\subsubsection{Switching equivalence classes.} The circuit rank of $C_n$ is  $1$ and then, by virtue of Corollary \ref{coro:card}, the switching equivalence classes of $G$-gain functions on $C_n$ are in bijection with $[G]$, the set of conjugacy classes of $G$. More precisely, fixing $W_0$ the closed walk $v_1,v_2,\ldots,v_n,v_1$ on $C_n$, we have
$$
(C_n,\psi_1)\sim (C_n,\psi_2) \iff [\psi_1(W_0)]=[\psi_2(W_0)].
$$

\subsubsection{Switching isomorphism classes.}
We have just said that on $C_n$ there are $|[G]|$ distinct switching equivalence classes of $G$-gain functions. A natural question is: which of these are in the same switching isomorphism class? It is easy to see that, in general, the automorphism group of a graph $\Gamma$ acts on the set of the switching equivalence classes: the orbits of this action are exactly the switching isomorphism classes of gain graphs whose underlying graph is isomorphic to $\Gamma$. The group of the automorphisms of $C_n$ is well known; it is isomorphic to the \emph{dihedral group $D_{2n}$} and its non-trivial elements can be partitioned   into \emph{reflections} and \emph{rotations}.
A rotation $\phi$ acts trivially on the switching equivalence classes, not affecting the conjugacy class of the gain of the closed walk $W_0$. More precisely:
$$
[\psi\circ \phi(W_0)]=[\psi(W_0)],\qquad \mbox{ for any $G$-gain function $\psi$ on $C_n$};
$$
thus  the switching equivalence classes are fixed by $\phi$.
On the contrary, a reflection $\sigma$ changes the orientation of the walk $W_0$, that is
$$
[\psi\circ \sigma(W_0)]=[\psi(W_0)^{-1}],\qquad \mbox{ for any $G$-gain function $\psi$ on $C_n$},
$$
and then  two gain functions  in the same switching isomorphism class can also have inverse gains on $W_0$. In particular, the switching isomorphisms classes of $C_n$ are in bijection with the conjugacy classes of $G$  up to group inversion. More explicitly,
 \begin{equation*}
\begin{split}
(C_n,\psi_1) \mbox{ and }  (C_n,\psi_2)\quad  \iff \qquad &[\psi_1(W_0)]=[\psi_2(W_0)] \\
\mbox{ are switching isomorphic }\qquad \quad\mbox{ or } &[\psi_1(W_0)]=[\psi_2(W_0)^{-1}].
\end{split}
\end{equation*}
Notice that if $G$ is such that $[g]=[g^{-1}]$ for every $g\in G$, then the switching isomorphism classes and the switching equivalence classes of $C_n$ coincide. A group with the property that every element is conjugate to its inverse is said to be \emph{ambivalent} \cite{ambi}, and the symmetric group $S_n$ is an example of such a group.
\subsubsection{$G$-cospectrality classes.} By virtue of Theorem \ref{teo:benposto}, two switching isomorphic $G$-gain graphs $(C_n,\psi_1)$ and  $(C_n,\psi_2)$ are $G$-cospectral.
\\We are going to prove that actually  $(C_n,\psi_1)$ and  $(C_n,\psi_2)$ are $G$-cospectral if and only if they are switching isomorphic. Suppose, by the contradiction,  that $(C_n,\psi_1)$ and  $(C_n,\psi_2)$ are $G$-cospectral but   they are not switching isomorphic. Let us set
 $a:=\psi_1(W_0)$ and  $b:=\psi_2(W_0)$ with $a,b\in G$. Since $(C_n,\psi_1)$ and  $(C_n,\psi_2)$ are non-switching isomorphic,
 it must be $[a]\neq[b]$ and $[a]\neq[b^{-1}]$. In particular, $a$ and $b$ cannot be both trivial; without loss of generality, we assume $a\neq 1_G$.

Since $(C_n,\psi_1)$ and  $(C_n,\psi_2)$ are $G$-cospectral, by  Proposition \ref{prop:cicli}, taking $h=n$ and $g=a$, one has:
\begin{equation}\label{eq:contra}
0<\left|\{W\in \mathcal C^n(C_n): \psi_1(W)\in [a]\} \right|
= \left|\{W\in \mathcal C^n(C_n): \psi_2(W)\in [a]\} \right|.
\end{equation}
But  in $\mathcal C^n(C_n)$ we have $2n$ closed walks visiting all vertices (as many as the pairs of centers and  orientations), and, when $n$ is even, we have also acyclic walks which are automatically balanced, since each edge is crossed the same number of times in the two opposite directions. Then
$$
\psi_2(W)\in[b]\cup[b^{-1}]\cup\{1_G\}
$$
for every $W\in \mathcal C^n(C_n)$. But $a\notin [b]\cup[b^{-1}]\cup\{1_G\}$, that is in contradiction with Eq.\ \eqref{eq:contra}.

\begin{corollary}\label{coro:det}
A $G$-gain graph whose underlying graph is a cycle is determined by its $G$-spectrum.
\end{corollary}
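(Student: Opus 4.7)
The plan is to show that $G$-cospectrality of two gain graphs on $C_n$ forces them into the same switching isomorphism class. First, I would recall from the preceding discussion that switching isomorphism classes of $G$-gain graphs on $C_n$ are in bijection with conjugacy classes $[g] \in [G]$ modulo the involution $[g] \mapsto [g^{-1}]$, with the correspondence sending $(C_n,\psi)$ to $[\psi(W_0)]$, where $W_0$ denotes the canonical cycle $v_1,v_2,\ldots,v_n,v_1$. So the task reduces to proving: if $(C_n,\psi_1)$ and $(C_n,\psi_2)$ are $G$-cospectral, then $[\psi_1(W_0)] \in \{[\psi_2(W_0)],[\psi_2(W_0)^{-1}]\}$.

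I would argue by contradiction. Set $a=\psi_1(W_0)$, $b=\psi_2(W_0)$, and suppose $[a]\neq[b]$ and $[a]\neq[b^{-1}]$. One of $a,b$ is nontrivial (otherwise both would lie in the trivial conjugacy class), and up to swapping $\psi_1$ and $\psi_2$ we may take $a\neq 1_G$. The strategy is now to apply \Cref{prop:cicli} with $h=n$ and $g=a$, which requires counting closed walks of length exactly $n$ on $C_n$ whose gain falls into the conjugacy class $[a]$.

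The main technical step, and the real obstacle, is the enumeration of $\mathcal{C}^n(C_n)$ and the determination of the possible gains. Here I would argue that any closed walk of length $n$ on $C_n$ falls into one of two types: either it traverses the cycle exactly once (in some orientation, starting from some vertex), or it is a back-and-forth walk that never closes the cycle. The first type contributes $2n$ walks, whose gains are conjugate in $G$ either to $\psi(W_0)$ or to $\psi(W_0)^{-1}$ (the conjugation arising from shifting the starting vertex along the cycle). The second type can only occur when $n$ is even, in which case every edge is traversed the same number of times in each direction, so the gain collapses to $1_G$. Hence every closed walk of length $n$ in $(C_n,\psi_2)$ has gain in $[b]\cup[b^{-1}]\cup\{1_G\}$.

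To finish, I would note that under $\psi_1$ there are at least $2n>0$ closed walks of length $n$ with gain in $[a]$, namely the full traversals of the cycle. Since $[a]$ is disjoint from $[b]\cup[b^{-1}]\cup\{1_G\}$ by our assumptions (recall $a\neq 1_G$), the count on the $\psi_2$ side is zero, contradicting \Cref{prop:cicli}. This forces $[a]=[b]$ or $[a]=[b^{-1}]$, i.e., $(C_n,\psi_1)$ and $(C_n,\psi_2)$ are switching isomorphic, completing the proof.
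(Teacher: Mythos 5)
There is a genuine gap: you are proving a weaker statement than the corollary asserts. According to the paper's definition, $(C_n,\psi)$ is \emph{determined by its $G$-spectrum} if \emph{every} $G$-gain graph $(\Gamma',\psi')$ that is $G$-cospectral with it is switching isomorphic to it --- the mate's underlying graph is arbitrary, not assumed to be a cycle. Your entire argument starts from ``two gain graphs on $C_n$'', so it only shows that among gain graphs whose underlying graph is $C_n$, $G$-cospectrality implies switching isomorphism. What is missing is the reduction showing that any $G$-cospectral mate must itself live on a cycle: from $G$-cospectrality, summing Eq.~\eqref{eq:prop} of Proposition \ref{prop:cicli} over all conjugacy classes (only finitely many carry closed walks of a given length) gives $|\mathcal C^h(C_n)|=|\mathcal C^h(\Gamma')|$ for all $h$, i.e.\ the underlying graphs are cospectral by Eq.~\eqref{eq:nogain}; since the cycle, as an ordinary graph, is determined by its spectrum, $\Gamma'$ is again a cycle on $n$ vertices, and only then does your cycle-versus-cycle contradiction argument (which is essentially the one the paper gives just before the corollary) apply. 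Without this step the proof does not establish the corollary as stated.

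Two smaller remarks on the cycle-versus-cycle part. First, your count ``at least $2n$ closed walks of length $n$ with gain in $[a]$'' is not quite right when $[a]\neq[a^{-1}]$: only the $n$ traversals in the orientation of $W_0$ are guaranteed to have gain in $[a]$, the reversed ones lie in $[a^{-1}]$; this is harmless, since positivity is all the contradiction needs. Second, the assertion that winding-number-zero closed walks are balanced deserves a word of justification for non-Abelian $G$ (``each edge crossed equally often in both directions'' does not by itself force the product to be $1_G$); the clean reason is that such a walk reduces to the trivial walk by cancelling backtracks, each cancellation removing a factor $\psi(u,v)\psi(v,u)=1_G$.
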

\begin{proof}
Let $(C_n,\psi)$ and $(\Gamma,\psi')$ be $G$-cospectral. In particular, they are $\pi_0$-cospectral, that is, the underlying graphs $C_n$ and $\Gamma$ are cospectral. Since $C_n$,  as (ungained) graph, is determined by its spectrum (e.g., \cite[Proposition 5]{deter}), also $\Gamma$ is a cycle. But we proved above that two $G$-cospectral gain graphs whose underlying  graph is a cycle, must be switching isomorphic.
\end{proof}

\subsubsection{$\lambda_G$-cospectrality classes.}
Here, we focus our attention on cospectrality of cyclic gain graphs with respect to the regular representation $\lambda_G$. We have already noticed that, as a consequence of \Cref{teo:pico}, two gain graphs $(\Gamma_1,\psi_1)$ and $(\Gamma_2,\psi_2)$ are $\lambda_G$-cospectral if and only if for every $h\in \mathbb N$, they have the same number of balanced closed walks of length $h$. When $\Gamma_1$ and $\Gamma_2$ are both isomorphic to $C_n$,  this simply means that $\psi_1(W_0)$ and $\psi_2(W_0)$ have the same \emph{order} in $G$, as the next proposition shows.

\begin{proposition}\label{prop:ordine}
Two gain graphs $(C_n,\psi_1)$ and $(C_n,\psi_2)$ are $\lambda_G$-cospectral if and only if $o(\psi_1(W_0))=o(\psi_2(W_0))$, where $W_0$ is the walk $v_1,v_2,\ldots,v_n,v_1$.
\end{proposition}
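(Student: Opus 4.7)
I would apply \Cref{teo:pico} with $\pi=\lambda_G$. Since $\chi_{\lambda_G}(g) = |G|$ if $g=1_G$ and $0$ otherwise, that theorem reduces $\lambda_G$-cospectrality of $(C_n,\psi_1)$ and $(C_n,\psi_2)$ to the condition that, for every $h \in \mathbb N$, the two graphs contain the same number of balanced closed walks of length $h$.

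Next I would pass to a canonical switching representative. Using the path $v_1-v_2-\cdots-v_n$ as a spanning tree, the standard tree-switching construction (employed, e.g., in the proof of \Cref{thm:primo}) allows us to switch each $\psi_i$ so that all $n-1$ path edges carry $1_G$ and the remaining edge $\{v_n,v_1\}$ carries $\psi_i(W_0)$ itself. By \Cref{teo:benposto} this switching preserves $G$-cospectrality, hence $\lambda_G$-cospectrality, so we may assume this canonical form from the start. Setting $a := \psi_i(W_0)$ and $d := o(a)$, any closed walk $W$ on a canonical gain graph satisfies
\[
\psi(W) \;=\; a^{w(W)},
\]
where $w(W)\in\mathbb Z$ is the signed count of traversals of $\{v_n,v_1\}$ (with the convention $+1$ for $v_n \to v_1$). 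The delicate point — and the step I expect to be the main obstacle — is this identity: although $G$ may be non-Abelian, the gain of $W$ is a product of factors $a^{\pm 1}$, which pairwise commute, so the product collapses to the single power $a^{w(W)}$. Consequently $W$ is balanced iff $d\mid w(W)$.

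To enumerate I would lift to the universal cover $\mathbb Z$ of $C_n$: closed walks of length $h$ starting at a fixed vertex with winding $k$ correspond bijectively to $\pm 1$-step lattice paths of length $h$ from $0$ to $kn$, of which there are $\binom{h}{(h+kn)/2}$ (with the usual convention that the binomial vanishes when its lower argument is not an integer in $[0,h]$). Summing over the $n$ starting vertices and over $k$ with $d\mid k$ yields
\[
B(h,d) \;=\; n\sum_{\substack{k\in\mathbb Z\\ d\mid k}} \binom{h}{(h+kn)/2}.
\]
Since $B(h,d)$ depends only on $d$, the implication $o(\psi_1(W_0)) = o(\psi_2(W_0)) \implies \lambda_G\text{-cospectrality}$ is immediate.

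For the converse, assume $d_1 := o(\psi_1(W_0)) \neq d_2 := o(\psi_2(W_0))$ and WLOG $d_1 < d_2$, treating $\infty$ as larger than every positive integer. Evaluating $B$ at $h = d_1 n$, only indices with $|k|\le d_1$ give non-zero binomials, so $B(d_1 n, d_2)$ reduces to the single term $k=0$, while $B(d_1 n, d_1)$ additionally contains the two terms $k=\pm d_1$, each contributing $\binom{d_1 n}{d_1 n} = \binom{d_1 n}{0} = 1$. Hence $B(d_1 n, d_1) = B(d_1 n, d_2) + 2n > B(d_1 n, d_2)$, so $(C_n,\psi_1)$ and $(C_n,\psi_2)$ are not $\lambda_G$-cospectral, completing the proof.
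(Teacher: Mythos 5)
Your proof is correct and follows essentially the same route as the paper's: both reduce $\lambda_G$-cospectrality to equality of the numbers of balanced closed walks of each length via \Cref{teo:pico}, observe that the gain of a closed walk on $C_n$ is (a conjugate of) $\psi_i(W_0)^k$ with $k$ the winding number, so balance depends only on whether $o(\psi_i(W_0))$ divides $k$, and for the converse exhibit the same discrepancy of exactly $2n$ balanced walks at length $n\cdot o(\psi_1(W_0))$. Your canonical switching representative and the explicit binomial enumeration are sound but merely make explicit what the paper argues directly.
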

\begin{proof}
Suppose that $o(\psi_1(W_0))=o(\psi_2(W_0))$. The gain $\psi_i$ of a closed walk $W$ is conjugated with a (possibly negative, depending on the orientation) power of $\psi_i(W_0)$.
By assumption, this power of $\psi_i(W_0)$ is equal to $1_G$ for $i=1$ if and only if it is equal to $1_G$ for $i=2$. As a consequence $(C_n,\psi_1)$ and $(C_n,\psi_2)$ have the same number of balanced closed walks of length $h$ for every $h\in\mathbb N$,
and so they are $\lambda_G$-cospectral.\\
\indent In order to prove the converse implication, suppose now  that $k:=o(\psi_1(W_0))<o(\psi_2(W_0))$.
We claim that
\begin{equation}\label{eq:nuova}
 |\{W\in \mathcal C^{nk}(C_n): \psi_1(W)=1_G \}|\geq  |\{W\in \mathcal C^{nk}(C_n): \psi_2(W)=1_G \}|+2n.
 \end{equation}
More precisely $2n$ is the number of  closed walks which make exactly $k$ turns of the cycle (they are as many as the pairs of central vertices and orientations of the cycle):
these walks are balanced with respect to $\psi_1$ (their gains are conjugated with  $\psi_1(W_0)^k=1_G$ or its inverse)  while they are unbalanced with respect to $\psi_2$ (since $\psi_2(W_0)^k\neq1_G$).
Moreover, any closed walk of length $nk$ balanced for $\psi_2$ is automatically balanced for $\psi_1$.
\\From Eq.~\eqref{eq:nuova} it follows that $(C_n,\psi_1)$ and $(C_n,\psi_2)$ cannot be $\lambda_G$-cospectral.
\end{proof}

This characterization highlights that, even if $\lambda_G$ is a faithful representation containing  every irreducible representation of $G$, the information on a gain graph  given by its $\lambda_G$-spectrum  is quite poor.
 Certainly, the $\lambda_G$-cospectrality is weaker than $G$-cospectrality, as already announced in Remark \ref{rem:nonleft}.

\begin{example}\label{exa:2}
Let $\mathbb T_5=\{1_{\mathbb{T}_5},\xi,\xi^2,\xi^3,\xi^4\}$ be the cyclic  group of order $5$.
 Since $\mathbb T_5$ is Abelian,
the switching equivalence classes of  $\mathbb T_5$-gain functions on $C_n$ are $5$. Moreover, since $\xi^4 =\xi^{-1}$, and $\xi^3 = (\xi^2)^{-1}$, there are
$3$  switching isomorphism classes of $\mathbb T_5$-gain graphs whose underlying graph is $C_n$, and then also the number of  $\mathbb T_5$-cospectrality classes is $3$. Finally, every non-trivial element of $\mathbb T_5$ has order $5$: by Proposition \ref{prop:ordine}  there are only $2$ possible $\lambda_{\mathbb T_5}$-spectra for a $\mathbb T_5$-gain graph on $C_n$, one for the balanced case and the other for the unbalanced case.
\\More explicitly,  when a $\mathbb T_5$-gain graph $(C_n,\psi)$ is balanced, its cover graph is isomorphic to the disjoint union of $5$ copies of $C_n$ and the  $\lambda_{\mathbb T_5}$-spectrum  of $(C_n,\psi)$
is given by
 $$
 \left\{ \left(2\cos\frac{2\pi j}{n} \right)^{(5)},\; j=0,\ldots, n-1 \right\}.$$
 On the other hand, when  $(C_n,\psi)$ is  unbalanced one can check that its cover graph is isomorphic to the cyclic graph $C_{5n}$ and the $\lambda_{\mathbb T_5}$-spectrum  of $(C_n,\psi)$
 is given by
 $$\left\{ 2\cos \frac{2\pi j}{5n},\; j=0,\ldots, 5n-1 \right\}.$$

 In particular, there exists a pair $(C_n,\psi_1)$ and $(C_n,\psi_2)$ of $\lambda_{\mathbb T_5}$-cospectral graphs  that are not $\mathbb T_5$-cospectral.
And then, by Theorem \ref{teo:cosp}, there exists an irreducible representation $\pi$ such that
 $(C_n,\psi_1)$ and $(C_n,\psi_2)$ are not $\pi$-cospectral. This provides an example of gain graphs that are cospectral with respect to a sum of representations but that are not cospectral with respect to some addends, explicitly showing that the converse
 of  Proposition \ref{prop:somma} is not true. Notice that $(C_n,\psi_1)$ and $(C_n,\psi_2)$ are always $\pi_0$-cospectral; since any other irreducible representation of $\mathbb T_5$ is faithful, it follows that $(C_n,\psi_1)$ and $(C_n,\psi_2)$ are not $\pi$-cospectral for some faithful representation $\pi$.
\end{example}

\subsection{$\mathbb T_m$-gain graphs}
As we have shown in the previous example,  cospectrality with respect to a faithful representation does not imply cospectrality with respect to any other faithful representation. What happens if one restricts to faithful irreducible representations?
In this subsection we investigate  this question for the cyclic group
$$
\mathbb T_m=\{1_{\mathbb T_m},\xi,\xi^2,\ldots, \xi^{m-1}\}
$$
or order $m$, which is  isomorphic to the group of $m$-th roots of unity.
 An element $f=\sum_{i=0}^{m-1}  f_{\xi^i} \,\xi^i \in \mathbb C \mathbb T_m$ can be identified with a polynomial of degree at most $m-1$ with coefficients in $\mathbb C$:
$$ f(z):=\sum_{i=0}^{m-1} f_{\xi^i
} z^{i}.$$
Two $\mathbb T_m$-gain graphs $(\Gamma_1,\psi_1)$ and $(\Gamma_2,\psi_2)$ with $A:=A_{(\Gamma_1,\psi_1)}$ and $B:=A_{(\Gamma_2,\psi_2)}$, are
$\mathbb T_m$-cospectral if and only if, for every $h\in \mathbb N$, the polynomials $Tr(A^h)$ and $Tr(B^h)$ coincide (see Definition \ref{def:cosp} and remember that $\mathbb T_m$ is Abelian). Thus $(\Gamma_1,\psi_1)$ and $(\Gamma_2,\psi_2)$ are $\mathbb T_m$-cospectral
if and only if, for every $h\in \mathbb N$, the polynomial
\begin{equation}\label{eq:pp}
P_h(z):= Tr(A^h)(z)-Tr(B^h)(z)
\end{equation}
is the zero polynomial.

The unitary irreducible representations of $\mathbb T_m$ have degree $1$ and each of them maps the generator $\xi$ to an $m$-th root of unit in $\mathbb C$. More precisely,
a complete system of  unitary irreducible representations of $\mathbb T_m$ is given by  $\pi_0,\pi_1,\ldots, \pi_{m-1}$, where for each $j$ one has:
\begin{equation}\label{eq:defi}
\pi_j(\xi)=e^{ \frac{2\pi ij}{m}}\in\mathbb C.
\end{equation}
Actually $\mathbb T_m$ can be seen already embedded in $\mathbb T$ (with $\xi=e^{ \frac{2\pi i}{m}}$), and in this case the representation $\pi_1$ coincides with $\pi_{id}$.
Notice that, for each $j=0,\ldots,m-1$, the linear extension of $\pi_j$ to $\mathbb C \mathbb T_m$ is such that:
\begin{equation}\label{eq:valu}
\pi_j\left(\sum_{i=0}^{m-1}  f_{\xi^i} \,\xi^i\right)=\sum_{i=0}^{m-1} f_{\xi^i}\pi_j(\xi^i)=\sum_{i=0}^{m-1} f_{\xi^i}\pi_j(\xi)^i=f(\pi_j(\xi)), \qquad \forall f\in\mathbb C \mathbb T_m.
\end{equation}

We are going to analyze   $\pi_j$-cospectrality of  $(\Gamma_1,\psi_1)$ and $(\Gamma_2,\psi_2)$ when $\pi_j$ is faithful.
A representation $\pi_j$ is faithful if and only if $\pi_j(\xi)$ is an $m$-th primitive root, that is, if and only if $j$ and $m$ are coprime.
The Eq.~\eqref{eq:inteo} of Theorem \ref{teo:pico} for the representation $\pi_j$  is equivalent, by linearity, to
\begin{equation}\label{eq:inpi}
\pi_j\left(Tr\left(A^h\right)\right)=\pi_j\left(Tr\left(B^h\right)\right),\qquad \forall h\in\mathbb N.
\end{equation}
Combining \Cref{eq:valu} with \Cref{eq:inpi}, recalling the definition of the polynomial $P_h$ from \Cref{eq:pp}, one has that $(\Gamma_1,\psi_1)$ and $(\Gamma_2,\psi_2)$ are $\pi_j$-cospectral if and only if
$$P_h( \pi_j(\xi))=0, \quad \forall h\in\mathbb N.$$

The \emph{cyclotomic polynomial $\Phi_m$} is irreducible on $\mathbb Z$ and its roots are the $m$-th primitive roots of unit.
Notice that for each $h$, the polynomial $P_h$ has integer coefficients, this implies that if an $m$-th primitive root is a root of $P_h$, then the cyclotomic polynomial $\Phi_m$ divides $P_h$ and then  any other  $m$-th primitive root is a root of $P_h$.
The next corollary directly follows.
\begin{corollary}\label{coro:ultimo}
Let  $(\Gamma_1,\psi_1)$ and $(\Gamma_2,\psi_2)$ be two $\mathbb T_m$-gain graphs. Let $\pi$ and $\pi'$ be two unitary, faithful, irreducible representations of $\mathbb T_m$. Then
$$(\Gamma_1,\psi_1)\mbox{ and }(\Gamma_2,\psi_2) \mbox{ are $\pi$-cospectral } \iff (\Gamma_1,\psi_1)\mbox{ and }(\Gamma_2,\psi_2) \mbox{ are $\pi'$-cospectral}.$$
In particular, when $m$ is prime, the gain graphs $(\Gamma_1,\psi_1)$ and $(\Gamma_2,\psi_2)$ are $\mathbb T_m$-cospectral if and only if they are $\pi_{id}$-cospectral and their underlying graphs $\Gamma_1$ and $\Gamma_2$ are cospectral.
\end{corollary}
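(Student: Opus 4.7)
The plan is to combine the polynomial reformulation set up in the paragraph preceding the corollary with the irreducibility of the cyclotomic polynomial $\Phi_m$. First I would record the key fact (immediate from \Cref{eq:valu} and the definition of $\pi_j$ in \Cref{eq:defi}) that a unitary irreducible representation $\pi_j$ is faithful precisely when $\pi_j(\xi)$ is a primitive $m$-th root of unity, and that evaluating the Fourier transform at $\pi_j$ amounts to evaluating the associated polynomial at $\pi_j(\xi)$.

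Next I would combine \Cref{teo:pico} with \Cref{eq:inpi} to rephrase $\pi$-cospectrality of $(\Gamma_1,\psi_1)$ and $(\Gamma_2,\psi_2)$ as the condition $P_h(\pi(\xi))=0$ for every $h\in\mathbb N$, where $P_h$ is the polynomial defined in \Cref{eq:pp}. Since the entries of $A:=A_{(\Gamma_1,\psi_1)}$ and $B:=A_{(\Gamma_2,\psi_2)}$ lie in $\mathbb T_m\cup\{0\}$, every power $A^h,\,B^h$ has coefficients in $\mathbb Z\mathbb T_m$, so both $Tr(A^h)(z)$ and $Tr(B^h)(z)$, and hence $P_h(z)$, have integer coefficients. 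Then I would invoke irreducibility of $\Phi_m$ over $\mathbb Q$: all primitive $m$-th roots of unity share the minimal polynomial $\Phi_m$, so $P_h(\pi(\xi))=0$ forces $\Phi_m\mid P_h$ in $\mathbb Q[z]$, and therefore $P_h$ vanishes at every primitive $m$-th root of unity, in particular at $\pi'(\xi)$. Symmetry then yields the first equivalence.

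For the second statement, when $m$ is prime every $j\in\{1,\ldots,m-1\}$ is coprime to $m$, so $\pi_1,\ldots,\pi_{m-1}$ are all faithful irreducible unitary representations. By the first part, $\pi_{id}$-cospectrality of $(\Gamma_1,\psi_1)$ and $(\Gamma_2,\psi_2)$ (recalling that $\pi_{id}=\pi_1$) propagates to $\pi_j$-cospectrality for every $j=1,\ldots,m-1$. Combined with $\pi_0$-cospectrality, which by \Cref{ex:1} is exactly cospectrality of the underlying graphs $\Gamma_1$ and $\Gamma_2$, we obtain $\pi_j$-cospectrality for each irreducible $\pi_j$ in the complete system. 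Theorem \ref{teo:cosp} then concludes $\mathbb T_m$-cospectrality, while the reverse implication is immediate from the same theorem.

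The only substantive step is the divisibility argument via $\Phi_m$; the rest is assembly of results already in place, and I do not expect any real obstacle beyond carefully tracking which representations are faithful and confirming the integrality of the coefficients of $P_h$.
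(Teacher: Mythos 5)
Your proposal is correct and follows essentially the same route as the paper: rephrasing $\pi_j$-cospectrality as the vanishing of the integer polynomials $P_h$ at $\pi_j(\xi)$, invoking irreducibility of the cyclotomic polynomial $\Phi_m$ to propagate vanishing among all primitive $m$-th roots, and then, for $m$ prime, combining faithfulness of all non-trivial $\pi_j$ with $\pi_0$-cospectrality and Theorem \ref{teo:cosp}. Your extra remark justifying the integrality of the coefficients of $P_h$ is a welcome detail the paper only asserts.
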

\begin{proof}
The representations $\pi$ and $\pi'$ are unitary, faithful, irreducible by hypothesis. Therefore we can assume $\pi=\pi_j$ and $\pi'=\pi_{j'}$ (see \Cref{eq:defi}), with
$j,j'\in\{1,\ldots,m-1\}$ both coprime with $m$. In particular $\pi_j(\xi)$ and $\pi_{j'}(\xi)$ are both $m$-th primitive roots, and then, for every $h\in\mathbb N$,
$$P_h(\pi_j(\xi))=0\iff P_h(\pi_{j'}(\xi))=0,$$
and then $(\Gamma_1,\psi_1)\mbox{ and }(\Gamma_2,\psi_2)$ are $\pi_{j}$-cospectral if and only if they are $\pi_{j'}$-cospectral.
\\ \indent The second part of the statement follows from the fact that, when $m$ is prime,  all non-trivial irreducible representations are faithful, and from the fact that the $\pi_0$-cospectrality is equivalent to the cospectrality of the underlying graphs.
\end{proof}

\begin{example}\label{exa:nuovofine}
Let $\mathbb T_5=\{1_{\mathbb T_5},\xi,\xi^2,\xi^3,\xi^4\}$ be the cyclic  group of order $5$.
Consider the $\mathbb T_5$-gain graphs $(\Gamma_1,\psi_1)$ and $(\Gamma_2,\psi_2)$ depicted in Fig. \ref{fig:uu}, where the gain of each unlabelled edge    is $1_{\mathbb T_5}$ and where if an oriented edge from a vertex $u$ to a vertex $v$ in $(\Gamma_i,\psi_i)$ has label $\xi$ then $\psi_i(u,v)=\xi$ and $\psi_i(v,u)=\xi^{-1}$.
\\The underlying graphs $\Gamma_1$ and $\Gamma_2$ are cospectral, both with characteristic polynomial equal to:
$$(x-1)(x^3-x^2-5x+1)(x+1)^2.$$
Moreover, $(\Gamma_1,\psi_1)$ and $(\Gamma_2,\psi_2)$ are $\pi_{id}$-cospectral, since both the matrices
$$\begin{pmatrix}
0&1&0&0&0&0\\
1&0&e^{\frac{2\pi i}{5}}&0&1&0\\
0&e^{-\frac{2\pi i}{5}}&0&1&1&0\\
0&0&1&0&e^{-\frac{2\pi i}{5}}&1\\
0&1&1&e^{\frac{2\pi i}{5}}&0&0\\
0&0&0&1&0&0\\
\end{pmatrix} \mbox{ and }
\begin{pmatrix}
0&1&1&1&1&1\\
1&0&0&0&0&0\\
1&0&0&e^{\frac{2\pi i}{5}}&0&0\\
1&0&e^{-\frac{2\pi i}{5}}&0&0&0\\
1&0&0&0&0&e^{\frac{2\pi i}{5}}\\
1&0&0&0&e^{-\frac{2\pi i}{5}}&0\\
\end{pmatrix}
$$
have characteristic polynomial:
$$\left(x^4-6x^2-4x\cos\frac{2\pi}{5}+1\right)(x^2-1).$$
By \Cref{coro:ultimo}, this is enough to state that $(\Gamma_1,\psi_1)$ and $(\Gamma_2,\psi_2)$ are $\mathbb T_5$-cospectral.
Notice that this implies also that $(\Gamma_1,\psi_1)$ and $(\Gamma_2,\psi_2)$ are $\mathbb T$-cospectral as $\mathbb T$-gain graphs.
\begin{center}
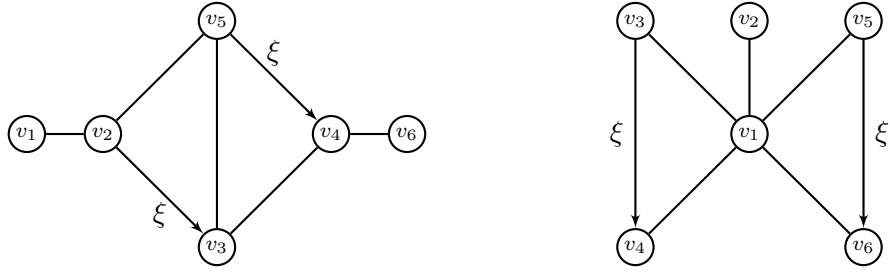
\begin{figure}
\begin{tikzpicture}


\node[vertex]  (1) at (0.5,0){\tiny $v_1$};

\node[vertex]  (2) at (1.5,0){\tiny $v_2$};

\node[vertex]  (3) at (3,-1.5){\tiny $v_3$};

\node[vertex]  (4) at (4.5,0){\tiny $v_4$};

\node[vertex]  (5) at (3,1.5){\tiny $v_5$};

\node[vertex]  (6) at (5.5,0){\tiny $v_6$};


\draw[edge] (1) -- (2);

\draw[dedge] (2) -- (3)node[midway, below] {\small $\xi$};

\draw[edge] (3) -- (4);

\draw[edge] (3) -- (5);

\draw[dedge] (5) -- (4)node[midway, above] {\small $\xi$};

\draw[edge] (5) -- (2);

\draw[edge] (4) -- (6);

\end{tikzpicture}
\hspace{2cm}
\begin{tikzpicture}


\node[vertex]  (1) at (0,0){\tiny $v_1$};

\node[vertex]  (2) at (0,1.5){\tiny $v_2$};

\node[vertex]  (3) at (-1.5,1.5){\tiny $v_3$};

\node[vertex]  (4) at (-1.5,-1.5){\tiny $v_4$};

\node[vertex]  (5) at (1.5,1.5){\tiny $v_5$};

\node[vertex]  (6) at (1.5,-1.5){\tiny $v_6$};


\draw[edge] (1) -- (2);
\draw[edge] (1) -- (3);
\draw[edge] (1) -- (4);
\draw[edge] (1) -- (5);
\draw[edge] (1) -- (6);
\draw[dedge] (3) -- (4)node[midway, left] {\small $\xi$};
\draw[dedge] (5) -- (6)node[midway, right] {\small $\xi$};

\end{tikzpicture}\caption{The $\mathbb T_5$-gain graphs $(\Gamma_1,\psi_1)$ and $(\Gamma_2,\psi_2)$ of Example \ref{exa:nuovofine}. }\label{fig:uu}
\end{figure}
\end{center}
\end{example}
In \Cref{coro:ultimo} we proved that two  $\mathbb T_m$-gain graphs are cospectral with respect to either all irreducible faithful representations or  none of them.
It is natural to ask whether this behavior is not peculiar of finite subgroups of $\mathbb T$ and if it is more generally valid, maybe even for non-Abelian groups. A positive answer  would suggest to consider a milder definition of cospectrality for $G$-gain graphs, less strong than $G$-cospectrality, but still independent from the choice of the representation (provided it is faithful and irreducible), which would be also more coherent with the ``classical'' concept of cospectrality in $\mathbb T$-gain graphs and signed graphs, where only the $\pi_{id}$-spectrum is considered. The answer is no: there are pairs of gain graphs, cospectral with respect to a  faithful irreducible representation and non-cospectral
with respect to another  faithful irreducible representation.
In order to show that, we are going to investigate a group with a richer  representation theory, such as the symmetric  group $S_4$ is.

\subsection{An example over the symmetric group $S_4$}
The symmetric group $S_4$ admits two faithful irreducible representations: the standard representation and its tensor product with the alternating representation \cite[Section~2.3]{fulton}, both of degree $3$.
We explicitly construct  unitary representatives $\pi_{St}$ and $\pi_{St\otimes A}$ for both representations in \Cref{table}.

\begin{center}
\begin{tabular}{c|c|c|}
& $(12)$& $(1234)$\\
\hline
$\pi_{St}$ &
$\begin{pmatrix}
-1&0&0\\
0&1&0\\
0&0&1\\
\end{pmatrix}$  &
$\begin{pmatrix}
-\frac{1}{2}&\frac{\sqrt{3}}{2}&0\\
-\frac{\sqrt{3}}{6}&-\frac{1}{6}& \frac{2\sqrt{2}}{3}&\\
-\frac{\sqrt{6}}{3}&-\frac{\sqrt{2}}{3}&-\frac{1}{3}\\
\end{pmatrix}$ \\
\hline
$\pi_{St\otimes A}$ &
$\begin{pmatrix}
1&0&0\\
0&-1&0\\
0&0&-1\\
\end{pmatrix}$  &
$\begin{pmatrix}
\frac{1}{2}&-\frac{\sqrt{3}}{2}&0\\
\frac{\sqrt{3}}{6}&\frac{1}{6}& -\frac{2\sqrt{2}}{3}&\\
\frac{\sqrt{6}}{3}&\frac{\sqrt{2}}{3}&\frac{1}{3}\\
\end{pmatrix}$ \\
\hline
\end{tabular}
\captionof{table}{The images of $(12)$ and $(1234)$ via the representations $\pi_{St}$ and $\pi_{St\otimes A}$.}\label{table}
\end{center}

The permutations $(12)$ and $(1234)$  generate $S_4$ and one can check  that  $\pi_{St}$ and $\pi_{St\otimes A}$ extend to  homomorphisms from $S_4$ to $U_3(\mathbb{C})$.
In other words, by using \Cref{table}, one can compute the values of $\pi_{St}$ and $\pi_{St\otimes A}$ on any permutation in $S_4$
(with the convention of multiplying from the left to the right).
For example, from the relation $(34)=(1234)^{-2}(12)(1234)^{2}$, we obtain:
\begin{equation*}\label{eq:34}
\begin{split}
 \pi_{St}((34))=  \pi_{St}((1234))^{-2} \pi_{St}((12))  \pi_{St}((1234))^2&=\begin{pmatrix}
1&0&0\\
0&\frac{1}{3}& \frac{2\sqrt{2}}{3}&\\
0& \frac{2\sqrt{2}}{3}&-\frac{1}{3}\\
\end{pmatrix};\\
\pi_{St\otimes A}((34))=  \pi_{St\otimes A}((1234))^{-2} \pi_{St\otimes A}((12))  \pi_{St\otimes A}((1234))^2&=\begin{pmatrix}
-1&0&0\\
0&-\frac{1}{3}& -\frac{2\sqrt{2}}{3}&\\
0& -\frac{2\sqrt{2}}{3}&\frac{1}{3}\\
\end{pmatrix}.
\end{split}
\end{equation*}
Also:
\begin{equation*}\label{eq:1234}
\begin{split}
 \pi_{St}((12)(34))=  \pi_{St}((12)) \pi_{St}((34)) &=
 \begin{pmatrix}
-1&0&0\\
0&\frac{1}{3}& \frac{2\sqrt{2}}{3}&\\
0& \frac{2\sqrt{2}}{3}&-\frac{1}{3}\\
\end{pmatrix};\\
 \pi_{St\otimes A}((12)(34))=  \pi_{St\otimes A}((12)) \pi_{St\otimes A}((34)) &=\begin{pmatrix}
-1&0&0\\
0&\frac{1}{3}& \frac{2\sqrt{2}}{3}&\\
0& \frac{2\sqrt{2}}{3}&-\frac{1}{3}\\
\end{pmatrix}.
\end{split}
\end{equation*}

\begin{example}\label{exa:s4}
Let $(\Gamma,\psi)$ be the $S_4$-gain graph depicted in Fig.~\ref{fig:2}, whose group algebra valued adjacency matrix is
$$A_{(\Gamma,\psi)}=\begin{pmatrix}
0&(12)(34)&0&1_{S_4}&0&0&0&0&0\\
(12)(34)&0&1_{S_4}&0&1_{S_4}&0&0&0&0\\
0&1_{S_4}&0&(12)(34)&1_{S_4}&0&0&0&0\\
1_{S_4}&0&(12)(34)&0&1_{S_4}&0&0&0&0\\
0&1_{S_4}&1_{S_4}&1_{S_4}&0&1_{S_4}&0&0&0\\
0&0&0&0& 1_{S_4}&0&(12)&0&(34)\\
0&0&0&0& 0&(12)&0&(34)&0\\
0&0&0&0& 0&0&(34)&0& (12)\\
0&0&0&0& 0&(34)&0&(12)&0\\
\end{pmatrix}.
$$
Let $(\Gamma,\psi')$ be the $S_4$-gain graph depicted in Fig.~\ref{fig:3}, whose group algebra valued adjacency matrix is
$$A_{(\Gamma,\psi')}=\begin{pmatrix}
0&(34)&0&(12)&0&0&0&0&0\\
(34)&0&(12)&0&1_{S_4}&0&0&0&0\\
0&(12)&0&(34)&1_{S_4}&0&0&0&0\\
(12)&0&(34)&0&1_{S_4}&0&0&0&0\\
0&1_{S_4}&1_{S_4}&1_{S_4}&0&1_{S_4}&0&0&0\\
0&0&0&0& 1_{S_4}&0&1_{S_4}&0&(12)(34)\\
0&0&0&0& 0&1_{S_4}&0&(12)(34)&0\\
0&0&0&0& 0&0&(12)(34)&0& 1_{S_4}\\
0&0&0&0& 0&(12)(34)&0&1_{S_4}&0\\
\end{pmatrix}.
$$
Notice that each unlabelled edge in Fig. \ref{fig:2} and Fig. \ref{fig:3} corresponds to an edge whose gain is $1_{S_4}$, and in both graphs each gain is an involution in $S_4$ and then it is not necessary to specify the edge direction associated with the label.

By a direct computation, one can check that the $27\times 27$ matrices $\pi_{St}(A_{(\Gamma,\psi)})$ and $\pi_{St}(A_{(\Gamma,\psi')})$ are cospectral but $\pi_{St\otimes A}(A_{(\Gamma,\psi)})$ and $\pi_{St\otimes A}(A_{(\Gamma,\psi')})$ are not.
More precisely, $\pi_{St}(A_{(\Gamma,\psi)})$ and $\pi_{St}(A_{(\Gamma,\psi')})$  and  $\pi_{St\otimes A}(A_{(\Gamma,\psi)})$
have the same characteristic polynomial, that is:
\begin{equation*}
\begin{split}
 (x^9-12x^7+44x^5-48x^3)\cdot (&x^{18}-24x^{16}-4x^{15}
+224x^{14}+64x^{13}-1024x^{12}-368x^{11}\\&+2352x^{10}+896x^9-2432x^8-768x^7+768x^6).
\end{split}
\end{equation*}
On the other hand, the characteristic polynomial  of $\pi_{St\otimes A}(A_{(\Gamma,\psi')})$  is:
\begin{equation*}
\begin{split}
 (x^9-12x^7+44x^5-48x^3)\cdot (&x^{18}-24x^{16}+4x^{15}
+224x^{14}-64x^{13}-1024x^{12}+368x^{11}\\&+2352x^{10}-896x^9-2432x^8+768x^7+768x^6).
\end{split}
\end{equation*}
Therefore $(\Gamma,\psi)$ and $(\Gamma,\psi')$ are $\pi_{St}$-cospectral but they are not  $\pi_{St\otimes A}$-cospectral.
Notice that the idea behind the construction is that the element $1_{S_4}+(12)(34)-(12)-(34)\in\mathbb C S_4$ is in the kernel of the extension of $\pi_{St}$ to $\mathbb C S_4$
(it is also in the kernel of the extension of the natural permutation representation, that is equivalent to $\pi_{0}\oplus \pi_{St}$).
But the same element is not in the kernel of the extension of
$\pi_{St\otimes A}$. More precisely:
\begin{equation}\label{eq:finale}
\begin{split}
\pi_{St}(1_{S_4})+\pi_{St}((12)(34))&=\pi_{St}((12))+\pi_{St}((34))\\
\pi_{St\otimes A}(1_{S_4})+\pi_{St\otimes A}((12)(34))&=-\pi_{St\otimes A}((12))-\pi_{St\otimes A}((34)).
\end{split}
\end{equation}
This way, looking at the gains in $(\Gamma,\psi)$ and $(\Gamma,\psi')$ of the two triangles, one has
$$\sum_{W \in\mathcal C^3} \chi_{\pi_{St}}(\psi(W))= \sum_{W \in\mathcal C^3} \chi_{\pi_{St}}(\psi'(W))$$
but
$$\sum_{W \in\mathcal C^3} \chi_{\pi_{St\otimes A}}(\psi(W))\neq \sum_{W \in\mathcal C^3} \chi_{\pi_{St\otimes A}}(\psi'(W)),$$
and then, for the representation $\pi_{St\otimes A}$,  the condition of Eq.~\eqref{eq:inteo} does not hold for $h=3$; it follows from Theorem \ref{teo:pico} that $(\Gamma,\psi)$ and $(\Gamma,\psi')$  are not  $\pi_{St\otimes A}$-cospectral.
\end{example}

\begin{center}
\begin{figure}
\begin{tikzpicture}
\node[vertex]  (1) at (0,0){\tiny $v_1$};
\node[vertex]  (2) at (2,2) {\tiny $v_2$};
\node[vertex]  (3) at (4,0) {\tiny $v_3$};
\node[vertex]  (4) at (2,-2) {\tiny $v_4$};
\node[vertex]  (5) at (6,0) {\tiny $v_5$};
\node[vertex]  (6) at (8,0){\tiny $v_6$};
\node[vertex]  (7) at (10,2) {\tiny $v_7$};
\node[vertex]  (8) at (12,0) {\tiny $v_8$};
\node[vertex]  (9) at (10,-2) {\tiny $v_9$};

\draw[edge] (1) -- (2) node[midway, above,sloped] {\small $(12)(34)$};
\draw[edge] (3) -- (4) node[midway, above, sloped] {\small $(12)(34)$};
\draw[edge] (3) -- (2);
\draw[edge] (1) -- (4);
\draw[edge] (2) -- (5);
\draw[edge] (3) -- (5);
\draw[edge] (4) -- (5);
\draw[edge] (6) -- (5);
\draw[edge] (6) -- (7) node[midway, above,sloped] {\small $(12)$};
\draw[edge] (8) -- (9) node[midway, above, sloped] {\small $(12)$};
\draw[edge] (8) -- (7) node[midway, above,sloped] {\small $(34)$};
\draw[edge] (6) -- (9) node[midway, above, sloped] {\small $(34)$};

\end{tikzpicture}\caption{The $S_4$-gain graph $(\Gamma,\psi)$ of Example \ref{exa:s4}. }\label{fig:2}
\vspace{1cm}
\begin{tikzpicture}

\node[vertex]  (1) at (0,0){\tiny $v_1$};
\node[vertex]  (2) at (2,2) {\tiny $v_2$};
\node[vertex]  (3) at (4,0) {\tiny $v_3$};
\node[vertex]  (4) at (2,-2) {\tiny $v_4$};
\node[vertex]  (5) at (6,0) {\tiny $v_5$};
\node[vertex]  (6) at (8,0){\tiny $v_6$};
\node[vertex]  (7) at (10,2) {\tiny $v_7$};
\node[vertex]  (8) at (12,0) {\tiny $v_8$};
\node[vertex]  (9) at (10,-2) {\tiny $v_9$};

\draw[edge] (1) -- (4) node[midway, below,sloped] {\small $(12)$};
\draw[edge] (3) -- (4) node[midway, above, sloped] {\small $(34)$};
\draw[edge] (2) -- (3)node[midway, below,sloped] {\small $(12)$};
\draw[edge] (1) -- (2)node[midway, above, sloped] {\small $(34)$};
\draw[edge] (2) -- (5);
\draw[edge] (3) -- (5);
\draw[edge] (4) -- (5);
\draw[edge] (6) -- (5);
\draw[edge] (6) -- (7);
\draw[edge] (8) -- (9);
\draw[edge] (8) -- (7) node[midway, above,sloped] {\small $(12)(34)$};
\draw[edge] (6) -- (9)node[midway, above,sloped] {\small $(12)(34)$};

\end{tikzpicture}\caption{The $S_4$-gain graph $(\Gamma,\psi')$ of Example \ref{exa:s4}. }\label{fig:3}
\end{figure}
\end{center}

\end{document}